\providecommand{\U}[1]{\protect\rule{.1in}{.1in}}
\newtheorem{theorem}{Theorem}[section]
\newtheorem{conjecture}[theorem]{Conjecture}
\newtheorem{corollary}[theorem]{Corollary}
\newtheorem{lemma}[theorem]{Lemma}
\newtheorem{problem}[theorem]{Problem}
\newtheorem{proposition}[theorem]{Proposition}
\newenvironment{proof}[1][Proof]{\noindent\textbf{#1.} }{\ \rule{0.5em}{0.5em}}
\begin{document}

\author{Vadim E. Levit\\Department of Mathematics\\Ariel University, Israel\\levitv@ariel.ac.il
\and Eugen Mandrescu\\Department of Computer Science\\Holon Institute of Technology, Israel\\eugen\_m@hit.ac.il}
\title{On Minimal Critical Independent Sets of Almost Bipartite
non-K\"{o}nig-Egerv\'{a}ry Graphs}
\date{}
\maketitle

\begin{abstract}
A set $S\subseteq V$ is \textit{independent} in a graph $G=\left(  V,E\right)
$ if no two vertices from $S$ are adjacent. The \textit{independence number}
$\alpha(G)$ is the cardinality of a maximum independent set, while $\mu(G)$ is
the size of a maximum matching in $G$. If $\alpha(G)+\mu(G)$ equals the order
of $G$, then $G$ is called a \textit{K\"{o}nig-Egerv\'{a}ry graph
}\cite{dem,ster}. The number $d\left(  G\right)  =\max\{\left\vert
A\right\vert -\left\vert N\left(  A\right)  \right\vert :A\subseteq V\}$ is
called the \textit{critical difference} of $G$ \cite{Zhang} (where $N\left(
A\right)  =\left\{  v:v\in V,N\left(  v\right)  \cap A\neq\emptyset\right\}
$). It is known that $\alpha(G)-\mu(G)\leq d\left(  G\right)  $ holds for
every graph \cite{Levman2011a,Lorentzen1966,Schrijver2003}.

A graph $G$ is \textit{(i)} \textit{unicyclic} if it has a unique cycle,
\textit{(ii)} \textit{almost bipartite} if it has only one odd cycle.

Let $\mathrm{\ker}(G)=\bigcap\left\{  S:S\text{ \textit{is a critical
independent set}}\right\}  $, \textrm{core}$\left(  G\right)  $ be the
intersection of all maximum independent sets, and \textrm{corona}$\left(
G\right)  $ be the union of all maximum independent sets\ of $G$. It is known
that \ $\mathrm{\ker}(G)\subseteq\mathrm{core}(G)$ is true for every graph
\cite{Levman2011a}, while the equality holds for bipartite graphs
\cite{Levman2011b}, and for unicyclic non-K\"{o}nig-Egerv\'{a}ry graphs
\cite{LevMan2014}.

In this paper, we prove that if $G$ is an almost bipartite
non-K\"{o}nig-Egerv\'{a}ry graph, then\emph{ }$\mathrm{\ker}(G)=$
$\mathrm{core}(G)$,\emph{ }$\mathrm{corona}(G)$ $\cup$ $N($\textrm{core}%
$\left(  G\right)  )=V(G)$, and $\left\vert \mathrm{corona}(G)\right\vert
+\left\vert \mathrm{core}(G)\right\vert =2\alpha(G)+1$.

\textbf{Keywords:} independent set, critical set, critical difference, almost
bipartite graph, K\"{o}nig-Egerv\'{a}ry graph.

\end{abstract}

\section{Introduction}

Throughout this paper $G=(V,E)$ is a finite, undirected, loopless graph
without multiple edges, with vertex set $V=V(G)$ of cardinality $n\left(
G\right)  $, and edge set $E=E(G)$ of size $m\left(  G\right)  $. If $X\subset
V$, then $G[X]$ is the subgraph of $G$ spanned by $X$. By $G-W$ we mean the
subgraph $G[V-W]$, if $W\subset V(G)$. For $F\subset E(G)$, by $G-F$ we denote
the subgraph of $G$ obtained by deleting the edges of $F$, and we use $G-e$,
if $F$ $=\{e\}$. If $A,B$ $\subset V$ and $A\cap B=\emptyset$, then $(A,B)$
stands for the set $\{e=ab:a\in A,b\in B,e\in E\}$. The neighborhood of a
vertex $v\in V$ is the set $N(v)=\{w:w\in V$ \textit{and} $vw\in E\}$, and
$N(A)=\bigcup\{N(v):v\in A\}$, $N[A]=A\cup N(A)$ for $A\subset V$. By
$C_{n},K_{n}$ we mean the chordless cycle on $n\geq$ $3$ vertices, and
respectively the complete graph on $n\geq1$ vertices. In order to avoid
ambiguity, we use $N_{G}(v)$ instead of $N(v)$, and $N_{G}(A)$ instead of
$N(A)$.

A \textit{cycle} is a trail, where the only repeated vertices are the first
and last ones. The graph $G$ is \textit{unicyclic} if it has a unique cycle.

Let us define the \textit{trace} of a family $%
\mathcal{F}%
$ of sets on the set $X$ as $%
\mathcal{F}%
|_{X}=\{F\cap X:F\in%
\mathcal{F}%
\}$.

A set $S$ of vertices is \textit{independent} if no two vertices from $S$ are
adjacent, and an independent set of maximum size will be referred to as a
\textit{maximum independent set}. The \textit{independence number }of $G$,
denoted by $\alpha(G)$, is the cardinality of a maximum independent
set\textit{\ }of $G$. Let $\Omega(G)=\{S:S$ \textit{is a maximum independent
set of} $G\}$, \textrm{core}$(G)={\displaystyle\bigcap}\{S:S\in\Omega(G)\}$
\cite{levm3}, and \textrm{corona}$(G)={\displaystyle\bigcup}\{S:S\in
\Omega(G)\}$ \cite{BorosGolLev}. Clearly, $\alpha(G)\leq\alpha(G-e)\leq
\alpha(G)+1$ holds for each edge $e$. An edge $e\in E(G)$ is $\alpha
$-\textit{critical} whenever $\alpha(G-e)>\alpha(G)$.

The number $d_{G}(X)=\left\vert X\right\vert -\left\vert N(X)\right\vert $ is
the \textit{difference} of the set $X\subseteq V(G)$, and $d(G)=\max
\{d_{G}(X):X\subseteq V\}$ is called the \textit{critical difference} of $G$.
A set $U\subseteq V(G)$ is \textit{critical} if $d_{G}(U)=d(G)$ \cite{Zhang}.
The number $id(G)=\max\{d_{G}(I):I\in\mathrm{Ind}(G)\}$ is called the
\textit{critical independence difference} of $G$. If $A\subseteq V(G)$ is
independent and $d_{G}(A)=id(G)$, then $A$ is called \textit{critical
independent }\cite{Zhang}. Clearly, $d(G)\geq id(G)$ is true for every graph
$G$. It is known that the equality $d(G)$ $=id(G)$ holds for every graph $G$
\cite{Zhang}.

For a graph $G$, let $\mathrm{\ker}(G)=\bigcap\left\{  S:S\text{ \textit{is a
critical independent set}}\right\}  $.

\begin{theorem}
\label{th3}\emph{(i)} \cite{Levman2011a} $\mathrm{\ker}(G)$ is the unique
minimal critical (independent) set of $G$, and $\mathrm{\ker}(G)\subseteq
\mathrm{core}(G)$ is true for every graph.

\emph{(ii)} \cite{Levman2011b,LevMan2014} If $G$ is a bipartite graph, or a
unicyclic non-K\"{o}nig-Egerv\'{a}ry graph, then $\mathrm{\ker}%
(G)=\mathrm{core}(G)$.
\end{theorem}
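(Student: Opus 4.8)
The plan is to treat the two items separately, building everything on the supermodularity of the difference function $X\mapsto d_{G}(X)=\left\vert X\right\vert -\left\vert N(X)\right\vert $. Since $N(X\cup Y)=N(X)\cup N(Y)$ and $N(X\cap Y)\subseteq N(X)\cap N(Y)$, one gets $d_{G}(X\cup Y)+d_{G}(X\cap Y)\geq d_{G}(X)+d_{G}(Y)$ for all $X,Y\subseteq V$. For part (i) I first show that the intersection of two critical independent sets $A,B$ is again critical independent: $A\cap B$ is independent, so $d_{G}(A\cap B)\leq id(G)$, while $d_{G}(A\cup B)\leq d(G)=id(G)$ (using Zhang's equality $d(G)=id(G)$); supermodularity then forces $d_{G}(A\cap B)\geq 2\,id(G)-d_{G}(A\cup B)\geq id(G)$, hence $d_{G}(A\cap B)=id(G)$. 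As $G$ is finite there are finitely many critical independent sets, so their intersection $\ker(G)$ is critical independent and, by construction, contained in every critical independent set; this is exactly the claim that $\ker(G)$ is the unique minimal critical independent set.

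To obtain $\ker(G)\subseteq\mathrm{core}(G)$ I prove the key lemma that for a critical independent set $I$ and any maximum independent set $S$, the set $I\cap S$ is critical independent. The engine is the maximality of $S$: the set $S'=(S\setminus N(I\setminus S))\cup(I\setminus S)$ is independent (no retained vertex of $S$ neighbours $I\setminus S$, and $I\setminus S$ is independent), so $\left\vert S'\right\vert \leq\alpha(G)=\left\vert S\right\vert $ gives $\left\vert I\setminus S\right\vert \leq\left\vert S\cap N(I\setminus S)\right\vert $. Because $S$ is independent, $S\cap N(I\cap S)=\emptyset$, so $S\cap N(I\setminus S)\subseteq N(I\setminus S)\setminus N(I\cap S)$ and therefore $\left\vert N(I\setminus S)\setminus N(I\cap S)\right\vert \geq\left\vert I\setminus S\right\vert $. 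Substituting into $d_{G}(I\cap S)=d_{G}(I)-\left\vert I\setminus S\right\vert +\left\vert N(I\setminus S)\setminus N(I\cap S)\right\vert $ yields $d_{G}(I\cap S)\geq d_{G}(I)=id(G)$, and the reverse inequality is automatic, so $I\cap S$ is critical independent. Applying this with $I=\ker(G)$ and any $S\in\Omega(G)$: since $\ker(G)\cap S$ is critical independent, minimality gives $\ker(G)\subseteq\ker(G)\cap S\subseteq S$, and intersecting over $\Omega(G)$ yields $\ker(G)\subseteq\mathrm{core}(G)$. (As a by-product $S'\supseteq I$ shows every critical independent set extends to a maximum independent set.)

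For part (ii) both hypotheses reduce matters to a K\"{o}nig-Egerv\'{a}ry structure. In the bipartite case $G$ is K\"{o}nig-Egerv\'{a}ry, so $\alpha(G)+\mu(G)=n(G)$ and $d(G)=\alpha(G)-\mu(G)$; then every maximum independent set $S$, being maximal, has $N(S)=V\setminus S$ and hence $d_{G}(S)=2\alpha(G)-n(G)=\alpha(G)-\mu(G)=d(G)=id(G)$, so $S$ is critical. Consequently $\mathrm{core}(G)=\bigcap\{S:S\in\Omega(G)\}$ is an intersection of critical independent sets, hence critical independent by the intersection lemma above. Combined with $\ker(G)\subseteq\mathrm{core}(G)$, it remains to establish the reverse inclusion $\mathrm{core}(G)\subseteq\ker(G)$, equivalently that \emph{every} critical independent set contains $\mathrm{core}(G)$.

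This reverse inclusion is the main obstacle. If a critical independent set $I$ omits some $v\in\mathrm{core}(G)$, then $v$ has no neighbour in $I$ (a common neighbour would violate independence of a maximum independent set containing $I\cup\{v\}$), so $I\cup\{v\}$ is independent; criticality of $I$ forces only $\left\vert N(v)\setminus N(I)\right\vert \geq1$, which is not yet a contradiction. To rule it out I would invoke K\"{o}nig's theorem: fix a maximum matching saturating the minimum cover $V\setminus S$ into a maximum independent set $S\supseteq I$, and use the Dulmage--Mendelsohn alternating-path structure to show that a vertex of $\mathrm{core}(G)$ cannot be swapped out of $S$, which forces it into $I$; making this matching argument precise is where the real work lies. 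For the unicyclic non-K\"{o}nig-Egerv\'{a}ry case the graph carries a unique odd cycle $C$, and deleting one edge of $C$ yields a forest, which is bipartite and hence covered by the previous case. The plan is to compare $\ker$ and $\mathrm{core}$ of $G$ with those of $G-e$ (and of $G-N[v]$ for a suitable $v\in C$), using part (i), the bipartite case, and the hypothesis that $G$ is non-K\"{o}nig-Egerv\'{a}ry to show that the single odd cycle does not separate $\ker(G)$ from $\mathrm{core}(G)$; controlling precisely how the odd cycle perturbs the critical independent sets is the principal difficulty.
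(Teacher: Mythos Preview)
The paper does not prove Theorem~\ref{th3}; it is quoted as background from \cite{Levman2011a,Levman2011b,LevMan2014}. So there is no ``paper's own proof'' to compare against, and the question is simply whether your argument stands on its own.

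Your treatment of part \emph{(i)} is essentially complete and correct: the supermodularity inequality, the closure of critical independent sets under intersection, and the swap argument showing $d_{G}(I\cap S)\geq d_{G}(I)$ for any $S\in\Omega(G)$ are all sound, and together they yield both that $\ker(G)$ is the unique minimal critical independent set and that $\ker(G)\subseteq\mathrm{core}(G)$. (One small omission: the theorem as stated says $\ker(G)$ is the unique minimal \emph{critical} set, not merely critical independent set; to cover arbitrary critical sets one uses that for any critical $U$ the independent set $U\setminus N(U)$ is again critical, hence contains $\ker(G)$, hence so does $U$. This is a one-line addition.)

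Part \emph{(ii)}, however, is not a proof but an outline with self-acknowledged gaps. In the bipartite case you correctly observe that every $S\in\Omega(G)$ is critical, so $\mathrm{core}(G)$ is critical independent; but this only re-derives $\ker(G)\subseteq\mathrm{core}(G)$. The reverse inclusion---that every critical independent set contains $\mathrm{core}(G)$---is the whole content, and your sketch (``invoke K\"{o}nig's theorem \dots\ Dulmage--Mendelsohn \dots\ making this matching argument precise is where the real work lies'') does not supply it. The same applies, more acutely, to the unicyclic non-K\"{o}nig--Egerv\'{a}ry case, where you only indicate a plan (``compare $\ker$ and $\mathrm{core}$ of $G$ with those of $G-e$ \dots\ controlling precisely how the odd cycle perturbs the critical independent sets is the principal difficulty''). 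As written, part \emph{(ii)} is a statement of intent, not a proof; the missing idea in the bipartite case is a concrete matching-based certificate that a vertex outside a given critical independent set can be removed from some maximum independent set, and in the unicyclic case one needs an actual mechanism relating the critical structure of $G$ to that of the bipartite pieces.
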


A matching (i.e., a set of non-incident edges of $G$) of maximum cardinality
$\mu(G)$ is a \textit{maximum matching} of $G$. It is well-known that
$\lfloor\frac{n\left(  G\right)  }{2}\rfloor+1\leq\alpha(G)+\mu(G)\leq
n\left(  G\right)  $ hold for every graph $G$. If $\alpha(G)+\mu(G)=n\left(
G\right)  $, then $G$ is called a K\"{o}nig-Egerv\'{a}ry graph \cite{dem,ster}%
. Various properties of K\"{o}nig-Egerv\'{a}ry graphs are presented in
\cite{bourhams1,bourpull,levm4,LevMan3,LevMan5}. It is known that every
bipartite graph is a K\"{o}nig-Egerv\'{a}ry\emph{ }graph \cite{eger,koen}.
This class includes also non-bipartite graphs (see, for instance, the graph
$G$ in Figure \ref{Fig5}).\begin{figure}[h]
\setlength{\unitlength}{1cm}\begin{picture}(5,1.8)\thicklines
\multiput(4,0.5)(1,0){7}{\circle*{0.29}}
\multiput(5,1.5)(2,0){2}{\circle*{0.29}}
\multiput(9,1.5)(1,0){2}{\circle*{0.29}}
\put(4,0.5){\line(1,0){6}}
\put(5,0.5){\line(0,1){1}}
\put(7,1.5){\line(1,-1){1}}
\put(7,0.5){\line(0,1){1}}
\put(7,1.5){\line(1,0){2}}
\put(9,0.5){\line(0,1){1}}
\put(9,1.5){\line(1,0){1}}
\put(10,0.5){\line(0,1){1}}
\put(4,0.1){\makebox(0,0){$a$}}
\put(4.7,1.5){\makebox(0,0){$b$}}
\put(6,0.1){\makebox(0,0){$c$}}
\put(3.2,1){\makebox(0,0){$G$}}
\end{picture}
\caption{$G$ is a K\"{o}nig-Egerv\'{a}ry graph with \textrm{core}$(G)=\left\{
a,b,c\right\}  $ and $\mathrm{\ker}(G)=\{a,b\}$.}%
\label{Fig5}%
\end{figure}
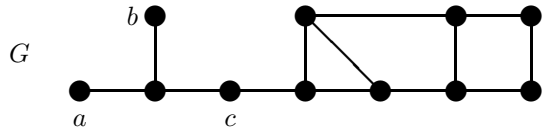

\begin{theorem}
\label{th1} If $G$ is a K\"{o}nig-Egerv\'{a}ry graph, then

\emph{(i) \cite{levm4} }$\mathrm{corona}(G)\cup$ $N(\mathrm{core}(G))=V(G)$;

\emph{(ii) \cite{LevManLemma2014}\ }$\left\vert \mathrm{core}\left(  G\right)
\right\vert +\left\vert \mathrm{corona}\left(  G\right)  \right\vert
=2\alpha(G)$.
\end{theorem}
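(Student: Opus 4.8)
The plan is to reduce both identities to the case $\mathrm{core}(G)=\emptyset$ and to drive the entire argument with one identity for König--Egerváry graphs, the claim $(\star)$: $d(G)=\alpha(G)-\mu(G)$. Since $\alpha(G)-\mu(G)\le d(G)$ always holds, only the reverse inequality is at stake. To obtain it I would fix $S\in\Omega(G)$, set $T=V(G)\setminus S$, and first record the standard structural lemma: because $T$ is a minimum vertex cover of size $\mu(G)$, a count of edge-endpoints lying in $T$ produces a matching $M$ that saturates $T$ with every edge joining $S$ and $T$. Taking $X=\mathrm{\ker}(G)$, which satisfies $d_G(X)=d(G)$ (as $\mathrm{\ker}(G)$ is critical and $d(G)=id(G)$) and $X\subseteq\mathrm{core}(G)\subseteq S$ by Theorem \ref{th3}, the matching $M$ sends each vertex of $T\setminus N(X)$ to a vertex of $S\setminus X$ (its $M$-partner cannot lie in $X$, for otherwise that vertex of $T$ would belong to $N(X)$). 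This injection gives $|T\setminus N(X)|\le|S\setminus X|$, which rearranges to $d_G(X)\le\alpha(G)-\mu(G)$, proving $(\star)$.

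Next I would pass to $G'=G-N[C]$ with $C=\mathrm{core}(G)$. Every $S\in\Omega(G)$ avoids $N(C)$, so $\Omega(G')=\{S\setminus C:S\in\Omega(G)\}$; hence $\alpha(G')=\alpha(G)-|C|$, $\mathrm{core}(G')=\emptyset$, and $\mathrm{corona}(G')=\mathrm{corona}(G)\setminus C$. A short count (using the restriction of $M$ and the general bound $\alpha(G')+\mu(G')\le n(G')$) gives $\mu(G')=\mu(G)-|N(C)|$, so $G'$ is again König--Egerváry. Now $(\star)$ does the decisive work: $\mathrm{core}(G')=\emptyset$ forces $\mathrm{\ker}(G')=\emptyset$ and hence $d(G')=0$, so the identity yields $\alpha(G')=\mu(G')$; that is, $G'$ has a perfect matching and $n(G')=2\alpha(G')$.

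For the covering identity I would show that a core-free König--Egerváry graph has $\mathrm{corona}=V$. If some $v$ lay in no maximum independent set, then $\Omega(G-v)=\Omega(G)$, so $G-v$ is still core-free, and the same counting shows it is König--Egerváry with $\alpha(G-v)=\alpha(G)$ and $n(G-v)=n(G)-1$; but the perfect-matching conclusion applied to $G-v$ gives $n(G)-1=2\alpha(G-v)=2\alpha(G)$, contradicting $n(G)=2\alpha(G)$. Applying this to $G'$ gives $\mathrm{corona}(G')=V(G')$, i.e. $\mathrm{corona}(G)\setminus C=V(G)\setminus N[C]$, whence $\mathrm{corona}(G)=V(G)\setminus N(\mathrm{core}(G))$; this is exactly (i) and shows that $\mathrm{corona}(G)$ and $N(\mathrm{core}(G))$ partition $V(G)$. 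Finally $n(G')=2\alpha(G')$ unwinds to $|N(C)|=n(G)+|C|-2\alpha(G)$, and substituting into $|\mathrm{corona}(G)|=n(G)-|N(C)|$ (from the partition in (i)) yields $|\mathrm{corona}(G)|+|\mathrm{core}(G)|=2\alpha(G)$, which is (ii).

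The main obstacle is $(\star)$ --- equivalently, the perfect-matching conclusion in the core-free case --- since it is the only point where König--Egerváry structure is used in an essential rather than a formal way. The injection $T\setminus N(X)\to S\setminus X$ must be set up with care, and it is crucial that $X$ be chosen inside every maximum independent set, so that $\mathrm{\ker}(G)$ (and not an arbitrary critical set) is the correct choice that makes $N(X)\subseteq T$. Everything after $(\star)$ --- the reduction, the two identities for $\mathrm{corona}$ and $N(\mathrm{core})$, and the vertex-deletion argument for (i) --- is bookkeeping; the one place warranting a careful check is the preservation of the König--Egerváry property under passing to $G-N[\mathrm{core}(G)]$ and to $G-v$, because it is precisely this that licenses reapplying $(\star)$ to the smaller graphs.
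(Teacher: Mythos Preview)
The paper does not prove Theorem~\ref{th1}; both parts are quoted as known results with citations to \cite{levm4} and \cite{LevManLemma2014}, so there is no in-paper argument to compare your proposal against.

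Your proof is nonetheless correct and self-contained. The identity $(\star)$ is established cleanly via the injection $T\setminus N(X)\to S\setminus X$ along a maximum matching, and your care that $X=\ker(G)\subseteq S$ (so that $N(X)\subseteq T$) is exactly what makes the count $|T|-|N(X)|\le |S|-|X|$ go through. The reduction to $G'=G-N[\mathrm{core}(G)]$ preserves the K\"onig--Egerv\'ary property as you claim: restricting the matching that saturates $T$ shows $\mu(G')\ge\mu(G)-|N(\mathrm{core}(G))|$, and the general bound $\alpha(G')+\mu(G')\le n(G')$ forces equality. The vertex-deletion step is also sound, because the perfect matching of $G'$ already obtained from $(\star)$ forces $\mu(G'-v)=\mu(G')-1$, so $G'-v$ stays K\"onig--Egerv\'ary and core-free, and a second application of $(\star)$ yields the parity contradiction $n(G')-1=2\alpha(G')=n(G')$. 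One cosmetic remark: the matching $M$ in your first paragraph is not really ``produced'' by a count; the fact you are using is that \emph{every} maximum matching of a K\"onig--Egerv\'ary graph matches $V(G)\setminus S$ into $S$, which follows immediately from $|V(G)\setminus S|=\mu(G)$ together with the independence of $S$.
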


We call a graph $G$\textit{\ almost bipartite} if it has a unique odd cycle,
denoted $C=\left(  V(C),E\left(  C\right)  \right)  $. Since $C$ is unique, it
is chordless, and there is no other cycle of $G$ sharing edges with $C$. For
every $y\in V(C)$, let us define $D_{y}=(V_{y},E_{y})$ as the connected
bipartite subgraph of $G-E(C)$ containing $y$, and
\[
N_{1}(C)=\{v:v\in V\left(  G\right)  -V(C),N(v)\cap V(C)\neq\emptyset\}.
\]

Clearly, every unicyclic graph with an odd cycle is almost bipartite.

\begin{proposition}
\label{prop3}If $G$\textit{\ is almost bipartite with }$C=\left(
V(C),E\left(  C\right)  \right)  $ as its unique odd cycle\textit{, then
}$V\left(  D_{a}\right)  \cap V\left(  D_{b}\right)  =\emptyset$ for every two
different vertices $a,b\in V(C)$.
\end{proposition}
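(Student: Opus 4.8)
The plan is to argue by contradiction, exploiting the fact that deleting $E(C)$ destroys every odd cycle of $G$. First I would observe that $G-E(C)$ is bipartite: any odd cycle $C'$ of $G-E(C)$ is edge-disjoint from $C$, hence $C'\neq C$ would be a second odd cycle of $G$, contradicting almost-bipartiteness. Consequently each $D_{y}$ is simply the connected component of $G-E(C)$ that contains $y$ (and it is bipartite, as the notation already anticipates). Since the vertex sets of the connected components of $G-E(C)$ partition $V(G)$, if $V(D_{a})\cap V(D_{b})\neq\emptyset$ for distinct $a,b\in V(C)$, then $D_{a}=D_{b}=:D$, a connected subgraph of $G-E(C)$ containing both $a$ and $b$.

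Next I would combine a path inside $D$ with an arc of $C$ to manufacture a short closed walk of odd length. Because $D$ is connected, fix a path $P$ in $G-E(C)$ from $a$ to $b$; in particular $E(P)\cap E(C)=\emptyset$. The vertices $a$ and $b$ split the (chordless, odd) cycle $C$ into two internally disjoint arcs $C_{1}$ and $C_{2}$ with $|E(C_{1})|+|E(C_{2})|=|E(C)|$ odd; hence $|E(C_{1})|$ and $|E(C_{2})|$ have opposite parities, so, whatever the length of $P$ is, exactly one of $|E(P)|+|E(C_{1})|$ and $|E(P)|+|E(C_{2})|$ is odd. Say $|E(P)|+|E(C_{1})|$ is odd. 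Then concatenating $P$ (traversed from $a$ to $b$) with $C_{1}$ (traversed from $b$ back to $a$) yields a closed walk $W$ in $G$ of odd length, and in particular of length at least $1$.

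Finally I would extract the forbidden second odd cycle. A closed walk of odd length always contains an odd cycle, so $W$ contains an odd cycle $C'$ with $E(C')\subseteq E(W)=E(P)\cup E(C_{1})$. Since $E(P)\cap E(C)=\emptyset$ and $E(C_{1})\cap E(C_{2})=\emptyset$, we obtain $E(C')\cap E(C_{2})=\emptyset$; as $a\neq b$ forces $E(C_{2})\neq\emptyset$, this shows $C'\neq C$. Thus $G$ has two distinct odd cycles, contradicting the hypothesis that $G$ is almost bipartite, and therefore $V(D_{a})\cap V(D_{b})=\emptyset$.

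I do not expect a serious obstacle here; the only points that need a little care are (a) the standard fact that a closed walk of odd length contains an odd cycle — a short induction on the length, splitting the walk at a repeated vertex into two strictly shorter closed walks, one of which is again of odd length — and (b) verifying $C'\neq C$ through edge sets rather than vertex sets, since the path $P$ may well pass through vertices of $C$, so one cannot assume $P$ and $C$ are disjoint.
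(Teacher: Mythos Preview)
Your proof is correct and follows essentially the same approach as the paper's: take a path from $a$ to $b$ inside $G-E(C)$, combine it with one of the two arcs of $C$ to obtain an odd closed walk, and extract from it an odd cycle distinct from $C$. You are in fact more careful than the paper, which simply asserts that the concatenation ``gives birth to an odd cycle, different from $C$''; your use of the odd-closed-walk lemma and the edge-set comparison to certify $C'\neq C$ fills in exactly the details the paper leaves implicit.
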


\begin{proof}
Assume, to the contrary, that there exist $a,b\in V(C)$, such that $V\left(
D_{a}\right)  \cap V\left(  D_{b}\right)  \neq\emptyset$. Let $x\in V\left(
D_{a}\right)  \cap V\left(  D_{b}\right)  $. Thus, there exists some path
containing $x$, and connecting $a$ and $b$. Let $P_{1}$ be a shortest one of
this kind. On the other hand, there exist two paths, say $P_{2}$ and $P_{3}$,
connecting $a$ and $b$, and containing only vertices belonging to $C$.
Therefore, either $P_{1}$ and $P_{2}$, or $P_{1}$ and $P_{3}$, give birth to
an odd cycle, different from $C$, and thus contradicting the fact that $C$ is
the unique odd cycle of $G$.
\end{proof}

As a consequence of Proposition \ref{prop3}, we may infer that $\left\{
V(D_{y}):y\in V(C)\right\}  $ is a partition of $V(G)$.

There exist K\"{o}nig-Egerv\'{a}ry graphs $G$ with $\mathrm{\ker}%
(G)\neq\mathrm{core}(G)$; for instance, the graph in Figure \ref{Fig5}.

There are also almost bipartite K\"{o}nig-Egerv\'{a}ry graph may have
$\mathrm{\ker}(G)\neq\mathrm{core}(G)$; e.g., the graphs in Figure \ref{Fig4}
have \textrm{core}$(G_{1})=\left\{  a\right\}  $ and \textrm{core}%
$(G_{2})=\left\{  u,v,w\right\}  $ .

\begin{figure}[h]
\setlength{\unitlength}{1cm}\begin{picture}(5,1.3)\thicklines
\multiput(2,0)(1,0){4}{\circle*{0.29}}
\multiput(3,1)(1,0){2}{\circle*{0.29}}
\put(2,0){\line(1,0){3}}
\put(3,1){\line(1,0){1}}
\put(3,0){\line(0,1){1}}
\put(4,1){\line(1,-1){1}}
\put(2,0.3){\makebox(0,0){$a$}}
\put(1.2,0.5){\makebox(0,0){$G_{1}$}}
\multiput(7,0)(1,0){6}{\circle*{0.29}}
\multiput(8,1)(1,0){5}{\circle*{0.29}}
\put(7,0){\line(1,0){5}}
\put(8,0){\line(0,1){1}}
\put(9,0){\line(0,1){1}}
\put(9,0){\line(1,1){1}}
\put(10,0){\line(0,1){1}}
\put(11,0){\line(0,1){1}}
\put(11,1){\line(1,0){1}}
\put(12,0){\line(0,1){1}}
\put(7,0.3){\makebox(0,0){$u$}}
\put(8.3,1){\makebox(0,0){$v$}}
\put(9.3,1){\makebox(0,0){$w$}}
\put(6.2,0.5){\makebox(0,0){$G_{2}$}}
\end{picture}\caption{Almost bipartite K\"{o}nig-Egerv\'{a}ry graphs with
$\mathrm{\ker}(G_{1})=\emptyset$ and $\mathrm{\ker}(G_{2})=\{u,v\}$.}%
\label{Fig4}%
\end{figure}
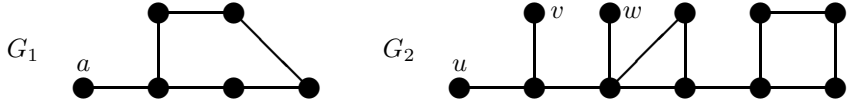

If $H_{j},j=1,2,...,k$, are all the connected components of $G$, \ it is easy
to see that
\begin{align*}
\Omega\left(  G\right)   &  =\bigcup\limits_{j=1}^{k}\Omega\left(
H_{j}\right)  ,\text{\ }\mathrm{core}(G)=\bigcup\limits_{j=1}^{k}%
\mathrm{core}\left(  H_{j}\right)  \text{ , }\\
\mathrm{corona}(G)  &  =\bigcup\limits_{j=1}^{k}\mathrm{corona}\left(
H_{j}\right)  \text{ and }\mathrm{\ker}(G)=\bigcup\limits_{j=1}^{k}%
\mathrm{\ker}\left(  H_{j}\right)  .
\end{align*}

In this paper we show that for every almost bipartite graph $G$, the following hold:

\emph{(i) }$\mathrm{\ker}(G)=$ $\mathrm{core}(G)$;

\emph{(ii) }$\mathrm{corona}(G)$ $\cup$ $N($\textrm{core}$\left(  G\right)
)=V(G)$;

\emph{(iii) }$\left\vert \mathrm{corona}(G)\right\vert +\left\vert
\mathrm{core}(G)\right\vert =2\alpha(G)+1$.

Since $\left\vert \mathrm{corona}(H)\right\vert +\left\vert \mathrm{core}%
(H)\right\vert =2\alpha(H)$ and the assertions \emph{(i)} \ and\ \emph{(ii)}%
\ hold for every bipartite connected component $H$ of $G$, we may assume that
every almost bipartite non-K\"{o}nig-Egerv\'{a}ry graph is connected.

\section{Results}

Recall the following useful results.

\begin{lemma}
\label{lem0}\cite{LevMan2012} For every bipartite graph $H$, a vertex
$v\in\mathrm{core}(H)$ if and only if there exists a maximum matching that
does not saturate $v$.
\end{lemma}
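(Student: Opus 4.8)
The plan is to derive the lemma directly from König's theorem, using that every bipartite graph — and, crucially, every induced subgraph of one — is a König-Egerv\'{a}ry graph \cite{eger,koen}, so that $\alpha(\cdot)+\mu(\cdot)=n(\cdot)$ and the complement of a maximum independent set is a minimum vertex cover of size $\mu$.

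For the implication ``$\Leftarrow$'', I would fix a maximum matching $M$ of $H$ that does not saturate $v$, take an \emph{arbitrary} $S\in\Omega(H)$, and run a counting argument. Since $H$ is König-Egerv\'{a}ry, $V(H)\setminus S$ is a vertex cover with $\left\vert V(H)\setminus S\right\vert=n(H)-\alpha(H)=\mu(H)=\left\vert M\right\vert$. Every edge of $M$ meets $V(H)\setminus S$, the edges of $M$ are pairwise disjoint, and $\left\vert M\right\vert=\left\vert V(H)\setminus S\right\vert$; hence each vertex of $V(H)\setminus S$ is an endpoint of exactly one edge of $M$, i.e., $M$ saturates all of $V(H)\setminus S$. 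As $M$ does not saturate $v$, we get $v\notin V(H)\setminus S$, so $v\in S$; since $S\in\Omega(H)$ was arbitrary, $v\in\mathrm{core}(H)$.

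For the implication ``$\Rightarrow$'', I would argue contrapositively: assuming every maximum matching of $H$ saturates $v$, I want to exhibit a maximum independent set of $H$ that avoids $v$. First, $\mu(H-v)<\mu(H)$, for otherwise a maximum matching of $H-v$ would be a maximum matching of $H$ missing $v$; and since deleting one vertex lowers the matching number by at most $1$, in fact $\mu(H-v)=\mu(H)-1$. Now $H-v$ is bipartite, hence König-Egerv\'{a}ry, so $\alpha(H-v)=n(H-v)-\mu(H-v)=(n(H)-1)-(\mu(H)-1)=n(H)-\mu(H)=\alpha(H)$. A maximum independent set of $H-v$ is therefore an independent set of $H$ of cardinality $\alpha(H)$ not containing $v$, whence $v\notin\mathrm{core}(H)$, completing the contrapositive.

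I do not expect a genuine obstacle here; the two points that need care are the counting step in ``$\Leftarrow$'' — namely that the equality $\left\vert M\right\vert=\left\vert V(H)\setminus S\right\vert$ forces $M$ to saturate the whole minimum vertex cover $V(H)\setminus S$ — and, in ``$\Rightarrow$'', the observation that the König-Egerv\'{a}ry property passes to $H-v$, which is exactly what converts the matching-number identity $\mu(H-v)=\mu(H)-1$ into the independence-number identity $\alpha(H-v)=\alpha(H)$ that supplies the desired maximum independent set.
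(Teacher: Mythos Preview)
Your argument is correct in both directions. The ``$\Leftarrow$'' counting step is sound: since $V(H)\setminus S$ is a vertex cover of size $\mu(H)=\left\vert M\right\vert$ and the edges of $M$ are disjoint, the map sending each edge of $M$ to one of its endpoints in $V(H)\setminus S$ is injective, hence bijective, so $M$ saturates all of $V(H)\setminus S$. The ``$\Rightarrow$'' direction is also fine; the key move --- applying the K\"{o}nig--Egerv\'{a}ry identity to the bipartite graph $H-v$ to convert $\mu(H-v)=\mu(H)-1$ into $\alpha(H-v)=\alpha(H)$ --- is exactly what is needed.

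As for comparison: the paper does not prove this lemma at all. It is quoted as a known result from \cite{LevMan2012} and used without proof, so there is nothing in the paper to compare your approach against. Your self-contained derivation from K\"{o}nig's theorem is a perfectly standard and clean way to establish it.
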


Lemma \ref{lem0} fails for non-bipartite K\"{o}nig-Egerv\'{a}ry graphs; e.g.,
every maximum matching of the graph $G$ from Figure \ref{Fig5} saturates
$c\in$ \textrm{core}$(G)=\{a,b,c\}$.

\begin{lemma}
\cite{LevMan2020}\label{lem2} If $G$ is an almost bipartite graph, then

\emph{(i)} $n(G)-1\leq\alpha(G)+\mu(G)\leq n(G)$;

\emph{(ii)} $n(G)-1=\alpha(G)+\mu(G)$ if and only if each edge of its unique
odd cycle is $\alpha$-critical.
\end{lemma}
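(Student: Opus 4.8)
The plan is to reduce everything to a single observation about deleting one edge of the odd cycle. Since $C$ is the unique odd cycle of $G$, for every edge $e\in E(C)$ the graph $G-e$ has no odd cycle at all: an odd cycle of $G-e$ would be an odd cycle of $G$ avoiding $e$, hence would have to coincide with $C$, which is impossible since $e\in E(C)$. Therefore $G-e$ is bipartite, and because every bipartite graph is K\"{o}nig-Egerv\'{a}ry we get
\[
\alpha(G-e)+\mu(G-e)=n(G-e)=n(G)\quad\text{for every }e\in E(C).
\]
I would then combine this with the two elementary facts $\alpha(G)\le\alpha(G-e)\le\alpha(G)+1$ (recalled in the Introduction) and $\mu(G-e)\le\mu(G)$ (a matching of $G-e$ is a matching of $G$).

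For \emph{(i)}, the inequality $\alpha(G)+\mu(G)\le n(G)$ is the general bound quoted in the Introduction; for the lower bound I fix any $e\in E(C)$ and note $n(G)=\alpha(G-e)+\mu(G-e)\le(\alpha(G)+1)+\mu(G)$, i.e. $\alpha(G)+\mu(G)\ge n(G)-1$. The same chain handles one direction of \emph{(ii)}: if $\alpha(G)+\mu(G)=n(G)-1$ then $n(G)=\alpha(G-e)+\mu(G-e)\le(\alpha(G)+1)+\mu(G)=n(G)$ is an equality, which forces $\alpha(G-e)=\alpha(G)+1$; as $e\in E(C)$ was arbitrary, every edge of $C$ is then $\alpha$-critical.

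For the reverse implication of \emph{(ii)} I would argue by contraposition: by \emph{(i)}, if $\alpha(G)+\mu(G)\neq n(G)-1$ then $G$ is K\"{o}nig-Egerv\'{a}ry, and I must produce an edge of $C$ that is not $\alpha$-critical. Take $S\in\Omega(G)$ and write $\left\vert V(C)\right\vert=2k+1$. Since $S\cap V(C)$ is independent in the chordless odd cycle $C$, we have $\left\vert S\cap V(C)\right\vert\le k$; these vertices are pairwise non-adjacent on $C$, so the edges of $C$ meeting $S$ number exactly $2\left\vert S\cap V(C)\right\vert\le 2k<\left\vert V(C)\right\vert$, leaving an edge $e=uv\in E(C)$ with $u,v\notin S$. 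Now pick any maximum matching $M$ of $G$. Since $S$ is independent, $V(G)\setminus S$ meets every edge of $G$, in particular every edge of $M$; because $\left\vert V(G)\setminus S\right\vert=n(G)-\alpha(G)=\mu(G)=\left\vert M\right\vert$ and $M$ is a matching, each edge of $M$ must have exactly one endpoint in $V(G)\setminus S$. Hence $e\notin M$, so $M$ is also a maximum matching of $G-e$ and $\mu(G-e)=\mu(G)$; using that $G-e$ and $G$ are both K\"{o}nig-Egerv\'{a}ry, $\alpha(G-e)=n(G)-\mu(G-e)=n(G)-\mu(G)=\alpha(G)$, so $e$ is not $\alpha$-critical. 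Consequently, if every edge of $C$ is $\alpha$-critical then $G$ is not K\"{o}nig-Egerv\'{a}ry, so $\alpha(G)+\mu(G)=n(G)-1$ by \emph{(i)}.

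The step I expect to be the main obstacle is the last one: locating a single edge of the odd cycle that lies outside some maximum independent set and is simultaneously avoided by some maximum matching. The clean mechanism is the counting argument above — in a K\"{o}nig-Egerv\'{a}ry graph the complement of a maximum independent set has exactly $\mu(G)$ vertices, so no maximum matching can contain an edge with both endpoints there. Everything else is bookkeeping built on the ``delete one cycle edge, become bipartite'' reduction.
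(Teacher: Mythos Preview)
The paper does not prove this lemma; it is quoted as a known result from \cite{LevMan2020} without argument, so there is no in-paper proof to compare against. Your argument is correct and self-contained. The reduction ``delete an edge of $C$ and become bipartite'' is exactly the natural device, and your handling of the reverse implication in \emph{(ii)} is sound: in a K\"{o}nig-Egerv\'{a}ry graph the complement of a maximum independent set has size $\mu(G)$, so by a pigeonhole count on matching endpoints no maximum matching can use an edge with both endpoints outside $S$; this forces the chosen edge $e\in E(C)$ with $u,v\notin S$ to satisfy $\mu(G-e)=\mu(G)$ and hence $\alpha(G-e)=\alpha(G)$. One minor stylistic point: the phrase ``each edge of $M$ must have exactly one endpoint in $V(G)\setminus S$'' deserves the one-line justification you presumably have in mind (namely, $2a+b\le\mu(G)=a+b$ forces $a=0$, where $a$ counts edges of $M$ with both endpoints in $V(G)\setminus S$), since as written the step is a bit compressed.
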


\begin{theorem}
\cite{LevMan2020}\label{th2} If $G$ is an almost bipartite
non-K\"{o}nig-Egerv\'{a}ry graph, then

\emph{(i)} \textrm{core}$(G)\cap N\left[  V\left(  C\right)  \right]
=\emptyset$;

\emph{(ii)} $\mathrm{core}\left(  G\right)  =%
{\displaystyle\bigcup\limits_{y\in V(C)}}
\mathrm{core}\left(  D_{y}-y\right)  $;

\emph{(iii)} $\Omega\left(  G\right)  |_{V\left(  D_{y}-y\right)  }%
=\Omega\left(  D_{y}-y\right)  $ for every $y\in V\left(  C\right)  $.
\end{theorem}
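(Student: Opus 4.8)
The plan is to reduce the whole statement to a single exact formula for $\alpha(G)$ that respects the block decomposition $\{V(D_{y}):y\in V(C)\}$ of Proposition \ref{prop3}, and to let the non-K\"{o}nig-Egerv\'{a}ry hypothesis enter only through Lemma \ref{lem2}: since $G$ is non-K\"{o}nig-Egerv\'{a}ry, Lemma \ref{lem2}(i) forces $\alpha(G)+\mu(G)=n(G)-1$, so by Lemma \ref{lem2}(ii) every edge of the unique odd cycle $C$ is $\alpha$-critical. Throughout I write $\left\vert V(C)\right\vert =2k+1$, so $\alpha(C)=k$. I would dispose of (i) first. Fix a cycle edge $e=v_{i}v_{i+1}$; as $\alpha(G-e)=\alpha(G)+1$ and $e$ is the only edge of $G$ missing from $G-e$, every $T\in\Omega(G-e)$ contains both $v_{i}$ and $v_{i+1}$, so $T\setminus\{v_{i}\}$ and $T\setminus\{v_{i+1}\}$ lie in $\Omega(G)$, the former avoiding $v_{i}$ and the latter (call it $S_{i}$) containing $v_{i}$. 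Since $S_{i}$ is independent it avoids $N(v_{i})$; letting $e$ run over $E(C)$ shows that no vertex of $N[V(C)]$ is in every maximum independent set, i.e. $\mathrm{core}(G)\cap N[V(C)]=\emptyset$.

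The technical heart is a block-decoupling identity. A non-cycle vertex of $D_{y}$ has all its $G$-neighbors inside $V(D_{y})$ (the only edges leaving a block are cycle edges, incident solely to the block's unique cycle vertex $y$), so once $I:=S\cap V(C)$ is fixed, the trace $S\cap V(D_{y}-y)$ is an arbitrary independent set of $D_{y}-y$ if $y\notin I$ and of $D_{y}-N_{D_{y}}[y]$ if $y\in I$, independently across blocks. Maximizing block by block, with $\delta_{y}:=1+\alpha(D_{y}-N_{D_{y}}[y])-\alpha(D_{y}-y)$, gives
\[
\alpha(G)=\sum_{y\in V(C)}\alpha(D_{y}-y)+\max_{I}\sum_{y\in I}\delta_{y},
\]
the maximum being over independent sets $I$ of $C$. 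From $\alpha(D_{y})=\max\{\alpha(D_{y}-y),\,1+\alpha(D_{y}-N_{D_{y}}[y])\}$ one reads off that $\delta_{y}\leq1$, with $\delta_{y}=1$ exactly when $y\in\mathrm{core}(D_{y})$ and $\delta_{y}\leq0$ otherwise. Hence, writing $T:=\{y\in V(C):y\in\mathrm{core}(D_{y})\}$, the maximum equals $\alpha(C[T])$, so $\alpha(G)=\sum_{y}\alpha(D_{y}-y)+\alpha(C[T])$.

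The main obstacle is to prove $T=V(C)$, and this is the only step using the hypothesis. I would apply the same block formula to $G-e$ for each cycle edge $e$: deleting $e$ leaves every block intact and turns the cyclic part into the path $C-e$, whence $\alpha(G-e)=\sum_{y}\alpha(D_{y}-y)+\alpha((C-e)[T])$. If some endpoint of $e$ lay outside $T$, then $e$ would not be an edge of $C[T]$, forcing $(C-e)[T]=C[T]$ and $\alpha(G-e)=\alpha(G)$, contradicting the $\alpha$-criticality of $e$. As every edge of $C$ is $\alpha$-critical, every cycle edge has both endpoints in $T$, i.e. $T=V(C)$; then $\alpha(C[T])=\alpha(C)=k$ and
\[
\alpha(G)=k+\sum_{y\in V(C)}\alpha(D_{y}-y).
\]

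Finally I would harvest (iii) and (ii). For $S\in\Omega(G)$, $\left\vert S\right\vert =\left\vert S\cap V(C)\right\vert +\sum_{y}\left\vert S\cap V(D_{y}-y)\right\vert \leq k+\sum_{y}\alpha(D_{y}-y)=\alpha(G)$, so equality forces $\left\vert S\cap V(D_{y}-y)\right\vert =\alpha(D_{y}-y)$ for every $y$, giving $\Omega(G)|_{V(D_{y}-y)}\subseteq\Omega(D_{y}-y)$. For the reverse inclusion, given $J\in\Omega(D_{y}-y)$, I would choose a maximum independent set $I$ of $C$ avoiding $y$ (possible, as the path $C-y$ on $2k$ vertices has $\alpha=k$), set the $y$-block equal to $J$, and fill each remaining block $D_{z}-z$ with a maximum independent set avoiding $N_{D_{z}}(z)$ whenever $z\in I$ (available since $z\in\mathrm{core}(D_{z})$ makes $\delta_{z}=1$); the resulting independent set has size $k+\sum_{z}\alpha(D_{z}-z)=\alpha(G)$ and restricts to $J$, so $\Omega(G)|_{V(D_{y}-y)}=\Omega(D_{y}-y)$, which is (iii). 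Then (ii) follows: by (i), $\mathrm{core}(G)$ misses $V(C)$, hence splits over the blocks, and for each $y$ one has $\mathrm{core}(G)\cap V(D_{y}-y)=\bigcap\Omega(G)|_{V(D_{y}-y)}=\bigcap\Omega(D_{y}-y)=\mathrm{core}(D_{y}-y)$; the union over $y\in V(C)$ yields $\mathrm{core}(G)=\bigcup_{y\in V(C)}\mathrm{core}(D_{y}-y)$.
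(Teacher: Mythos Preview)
Your proof is correct. Note, however, that the paper does not itself prove Theorem~\ref{th2}: the result is quoted from \cite{LevMan2020} without argument, so there is no in-paper proof against which to compare yours directly.

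That said, your route is self-contained (it uses only Proposition~\ref{prop3} and Lemma~\ref{lem2}) and in fact reconstructs, from first principles, facts the paper imports from \cite{LevMan2020}. Your key claim $T=V(C)$, i.e.\ $y\in\mathrm{core}(D_y)$ for every $y\in V(C)$, is exactly the implication \emph{(iii)}~$\Rightarrow$~\emph{(i)} of Proposition~\ref{prop44}, and your identity $\alpha(G)=k+\sum_{y}\alpha(D_y-y)$ is equivalent to the formula of Lemma~\ref{lem9} once one substitutes $\alpha(D_y-y)=\alpha(D_y)-1$ (a consequence of $y\in\mathrm{core}(D_y)$). The clean device in your argument is the block formula $\alpha(G)=\sum_{y}\alpha(D_y-y)+\alpha(C[T])$ together with its path analogue $\alpha(G-e)=\sum_{y}\alpha(D_y-y)+\alpha((C-e)[T])$; the $\alpha$-criticality of every cycle edge (Lemma~\ref{lem2}\emph{(ii)}) then forces both endpoints of each $e\in E(C)$ into $T$, hence $T=V(C)$, after which parts \emph{(iii)} and \emph{(ii)} fall out by the counting and trace arguments you gave. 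The proof of \emph{(i)} via $\alpha$-critical edges is standard and correct.
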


\begin{figure}[h]
\setlength{\unitlength}{1cm}\begin{picture}(5,1.3)\thicklines
\multiput(2,0)(1,0){6}{\circle*{0.29}}
\multiput(3,1)(1,0){3}{\circle*{0.29}}
\put(2,0){\line(1,0){5}}
\put(3,0){\line(0,1){1}}
\put(4,0){\line(0,1){1}}
\put(4,1){\line(1,0){1}}
\put(5,1){\line(1,-1){1}}
\put(2,0.3){\makebox(0,0){$a$}}
\put(3.3,1){\makebox(0,0){$b$}}
\put(7,0.3){\makebox(0,0){$c$}}
\put(1.2,0.5){\makebox(0,0){$G_{1}$}}
\multiput(9,0)(1,0){5}{\circle*{0.29}}
\multiput(11,1)(1,0){2}{\circle*{0.29}}
\put(9,0){\line(1,0){4}}
\put(10,0){\line(1,1){1}}
\put(11,1){\line(1,0){1}}
\put(12,0){\line(0,1){1}}
\put(9,0.3){\makebox(0,0){$u$}}
\put(11,0.3){\makebox(0,0){$v$}}
\put(13,0.3){\makebox(0,0){$w$}}
\put(8.2,0.5){\makebox(0,0){$G_{2}$}}
\end{picture}\caption{$G_{1},G_{2}$ are K\"{o}nig-Egerv\'{a}ry graphs,
\textrm{core}$(G_{1})=\left\{  a,b,c\right\}  $, \textrm{core}$(G_{2}%
)=\left\{  u,v,w\right\}  $.}%
\label{fig11222}%
\end{figure}
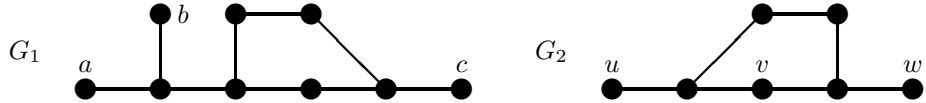

The assertion in Theorem \ref{th2}\emph{(ii)} may fail for connected unicyclic
K\"{o}nig-Egerv\'{a}ry graphs. For instance,
\[
\mathrm{core}\left(  G_{2}\right)  \neq\left\{  u,w\right\}  =%
{\displaystyle\bigcup\limits_{y\in V(C)}}
\mathrm{core}\left(  D_{y}-y\right)  ,
\]
while $\mathrm{core}\left(  G_{1}\right)  =%
{\displaystyle\bigcup\limits_{y\in V(C)}}
\mathrm{core}\left(  D_{y}-y\right)  $, where $G_{1}$ and $G_{2}$ are from
Figure \ref{fig11222}.

\begin{proposition}
\cite{LevMan2020}\label{prop44} Let $G$ be an almost bipartite graph. Then the
following assertions are equivalent:

\emph{(i)} $y\in\mathrm{core}(D_{y})$, for every $y\in V(C)$;

\emph{(ii)} there exists some $S\in\Omega(G)$, such that $S\cap N_{1}%
(C)=\emptyset$;

\emph{(iii)} $n(G)-1=\alpha(G)+\mu(G)$, i.e., $G$ is not a
K\"{o}nig-Egerv\'{a}ry graph.
\end{proposition}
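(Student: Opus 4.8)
I would establish the three equivalences through the cyclic chain $(i)\Rightarrow(ii)\Rightarrow(iii)\Rightarrow(i)$. First I would reduce to $G$ connected (the bipartite components are K\"{o}nig-Egerv\'{a}ry, contribute their full order to $\alpha+\mu$, and contain no part of $C$, $D_y$ or $N_1(C)$), so that $\{V(D_y):y\in V(C)\}$ is a partition of $V(G)$, the only edges of $G$ joining two different classes $V(D_y),V(D_z)$ are edges of the cycle $C$, and $N_1(C)=\bigcup_{y\in V(C)}N_{D_y}(y)$ with the union disjoint; write $2k+1=\left|V(C)\right|$. For $(i)\Rightarrow(ii)$ I would proceed by an explicit construction. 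Note that $y\in\mathrm{core}(D_y)$ gives $\alpha(D_y-y)=\alpha(D_y)-1$ and — since every maximum independent set of $D_y$ contains $y$, hence avoids $N_{D_y}(y)$ — also $\alpha\big(D_y-N_{D_y}[y]\big)=\alpha(D_y)-1$, so $\Omega\big(D_y-N_{D_y}[y]\big)\subseteq\Omega(D_y-y)$. Fix a maximum independent set $J$ of $C$ (so $\left|J\right|=k$); for $y\in J$ choose $S_y\in\Omega(D_y)$ (then automatically $y\in S_y$ and $S_y\cap N_{D_y}(y)=\emptyset$), for $y\in V(C)\setminus J$ choose $T_y\in\Omega\big(D_y-N_{D_y}[y]\big)$, and put $S=\bigcup_{y\in J}S_y\cup\bigcup_{y\in V(C)\setminus J}T_y$. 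Then $S$ is independent (inside one class this is clear, and an edge between two classes is an edge $yz$ of $C$, so $J$ independent puts some endpoint, say $z$, outside $J$, whence $z\notin T_z\subseteq S$), and $S\cap N_1(C)=\emptyset$ because each class contributes a set missing $N_{D_y}(y)$. Finally $\left|S\right|=\sum_{y\in V(C)}\alpha(D_y)-(k+1)$, while for any $W\in\Omega(G)$, using $(i)$ and that $W\cap V(C)$ is independent in $C$ (so $\left|V(C)\setminus W\right|\ge k+1$),
\[
\alpha(G)=\sum_{y\in V(C)}\left|W\cap V(D_y)\right|\le\sum_{y\in W}\alpha(D_y)+\sum_{y\in V(C)\setminus W}\big(\alpha(D_y)-1\big)\le\sum_{y\in V(C)}\alpha(D_y)-(k+1),
\]
so $\left|S\right|\ge\alpha(G)$ and therefore $S\in\Omega(G)$, which is $(ii)$.

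For $(ii)\Rightarrow(iii)$, let $S\in\Omega(G)$ with $S\cap N_1(C)=\emptyset$. I would invoke the standard fact that if $G$ were a K\"{o}nig-Egerv\'{a}ry graph then, for every $S'\in\Omega(G)$, the set $V(G)\setminus S'$ satisfies Hall's condition towards $S'$: a maximum matching has $n(G)-\alpha(G)=\left|V(G)\setminus S'\right|$ edges and, having no edge inside the independent set $S'$, must match $V(G)\setminus S'$ injectively into $S'$. Apply this with $X=V(C)\setminus S$: for $w\in X$ one has $N_G(w)=\{\text{the two }C\text{-neighbors of }w\}\cup N_{D_w}(w)$ and $N_{D_w}(w)\subseteq N_1(C)$ is disjoint from $S$, so $N_G(X)\cap S\subseteq S\cap V(C)$; but $\left|S\cap V(C)\right|\le k<k+1\le\left|V(C)\setminus S\right|=\left|X\right|$, so Hall fails. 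Hence $G$ is not a K\"{o}nig-Egerv\'{a}ry graph, which by Lemma~\ref{lem2}(i) is exactly $(iii)$.

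For $(iii)\Rightarrow(i)$, which I regard as the conceptual core, I would first use Lemma~\ref{lem2}(ii): every edge of $C$ is $\alpha$-critical, so for $e=y_iy_{i+1}$ there is a set $T$ with $\left|T\right|=\alpha(G)+1$ independent in $G-e$; being too large to be independent in $G$, $T$ contains both $y_i$ and $y_{i+1}$, and then $T\setminus\{y_{i+1}\}$ and $T\setminus\{y_i\}$ belong to $\Omega(G)$ — so every vertex of $C$ lies in $\mathrm{corona}(G)$. Now suppose $y\notin\mathrm{core}(D_y)$ for some $y\in V(C)$, i.e. $\alpha(D_y-y)=\alpha(D_y)$. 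For an arbitrary $S\in\Omega(G)$, Theorem~\ref{th2}(iii) gives $S\cap V(D_y-y)\in\Omega(D_y-y)$, a set of size $\alpha(D_y)$ that is independent in $D_y$ and avoids $y$; were $y\in S$, this set together with $y$ would be an independent set of $D_y$ of size $\alpha(D_y)+1$, impossible, so $y\notin S$. Since $S$ was arbitrary, $y\notin\mathrm{corona}(G)$, contradicting the previous sentence. Therefore $y\in\mathrm{core}(D_y)$ for every $y\in V(C)$, i.e. $(i)$.

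The step I expect to demand the most care is $(i)\Rightarrow(ii)$: the auxiliary identity $\alpha\big(D_y-N_{D_y}[y]\big)=\alpha(D_y)-1$ and the verification that the glued set is at once independent, of size $\sum_{y}\alpha(D_y)-(k+1)$, and disjoint from $N_1(C)$ are where the bookkeeping lives; the other two implications become short once one has the corona observation together with Theorem~\ref{th2}. (If one wishes to bypass Theorem~\ref{th2} in $(iii)\Rightarrow(i)$, an alternative is to prove that the equality $\alpha(G)=\sum_{y}\alpha(D_y)-(k+1)$ forces $(i)$ by the same counting, and to derive that equality from $(iii)$ using $\mu(G)\le\mu(G-E(C))+k$ and K\"{o}nig's theorem on the bipartite pieces $D_y$.)
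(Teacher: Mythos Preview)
The paper does not supply its own proof of Proposition~\ref{prop44}: the result is quoted from \cite{LevMan2020} and used as a black box, so there is no in-paper argument to compare yours against.

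Your cyclic proof is correct as written. The construction in $(i)\Rightarrow(ii)$ is clean, and the Hall-type obstruction in $(ii)\Rightarrow(iii)$ (a maximum matching in a K\"{o}nig--Egerv\'{a}ry graph must match $V(G)\setminus S$ into $S$) is exactly the right tool. For $(iii)\Rightarrow(i)$ your corona argument together with Theorem~\ref{th2}\emph{(iii)} is valid inside this paper, but be aware that Theorem~\ref{th2} is also imported from \cite{LevMan2020}; if in that source its proof relies on Proposition~\ref{prop44}, your main route would be circular. You anticipate this, which is good, but your parenthetical alternative is not quite closed: from $(iii)$ together with $\mu(G)\le\sum_{y}\mu(D_y)+k$ and K\"{o}nig on each $D_y$ you get only the lower bound $\alpha(G)\ge\sum_y\alpha(D_y)-(k+1)$, whereas the matching upper bound in your ``same counting'' used $\alpha(D_y-y)=\alpha(D_y)-1$, i.e.\ assertion~$(i)$ itself. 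If you want a route that avoids Theorem~\ref{th2} entirely, one option is to run the contrapositive directly: if $y_0\notin\mathrm{core}(D_{y_0})$ then, by Lemma~\ref{lem0} applied to the bipartite graph $D_{y_0}$, every maximum matching of $D_{y_0}$ saturates $y_0$; gluing maximum matchings of all the $D_y$ with $k$ suitable edges of $C$ then yields a matching of size $\sum_y\mu(D_y)+k=n(G)-\sum_y\alpha(D_y)+k$, and combining with the independent set you built (size $\sum_y\alpha(D_y)-(k+1)$, still valid since only $y_0$ may fail $(i)$ and you can place $y_0$ outside $J$) forces $\alpha(G)+\mu(G)\ge n(G)-1$, after which a short parity/size check on the cycle shows equality cannot jump to $n(G)$ --- but this still needs care, so I would either keep the Theorem~\ref{th2} route and flag the dependency, or expand the alternative fully rather than leave it as a remark.
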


\begin{corollary}
\label{cor3}If $G$ is an almost bipartite non-K\"{o}nig-Egerv\'{a}ry graph,
then there exists some $S\in\Omega(G)$, such that $\left\vert S\cap
V(C)\right\vert =\left\lfloor \frac{\left\vert V(C)\right\vert }%
{2}\right\rfloor $, where $C$ is its unique odd cycle.
\end{corollary}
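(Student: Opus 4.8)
The plan is to bootstrap from the maximum independent set supplied by Proposition~\ref{prop44}. Since $G$ is not a K\"{o}nig-Egerv\'{a}ry graph, the implication \emph{(iii)}$\Rightarrow$\emph{(ii)} of Proposition~\ref{prop44} yields some $S\in\Omega(G)$ with $S\cap N_{1}(C)=\emptyset$. Writing $\left\vert V(C)\right\vert =2k+1$ (so $\left\lfloor \left\vert V(C)\right\vert /2\right\rfloor =k$), I will show that this very $S$ already satisfies $\left\vert S\cap V(C)\right\vert =k$, which suffices.

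For the upper bound, note that $A:=S\cap V(C)$ is independent in $G$; as the unique odd cycle $C$ is chordless we have $G[V(C)]=C$, so $A$ is an independent set of $C_{2k+1}$, and therefore $\left\vert A\right\vert \leq\alpha(C_{2k+1})=k$.

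For the reverse inequality I would use an exchange argument. Assume $\left\vert A\right\vert <k$ and pick a maximum independent set $A'$ of $C$, so $\left\vert A'\right\vert =k$. Put $S':=\left(S\setminus V(C)\right)\cup A'$. Then $S'$ is independent in $G$: it has no edge inside $S\setminus V(C)$ (a subset of $S$), no edge inside $A'$ (because $G[V(C)]=C$ and $A'$ is independent in $C$), and any edge joining $A'\subseteq V(C)$ to a vertex of $S\setminus V(C)$ would force that vertex into $N_{1}(C)$, contradicting $S\cap N_{1}(C)=\emptyset$. Hence $\left\vert S'\right\vert =\left\vert S\right\vert -\left\vert A\right\vert +k>\left\vert S\right\vert =\alpha(G)$, which is impossible. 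Therefore $\left\vert A\right\vert =k=\left\lfloor \left\vert V(C)\right\vert /2\right\rfloor $, as claimed.

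The one place where care is needed — the real crux — is recognizing that Proposition~\ref{prop44} is precisely what decouples $V(C)$ from the rest of $G$: once $S$ misses $N_{1}(C)$, the members of $S$ lying off the cycle place no restriction on which independent set of $C$ we splice in, so $S\cap V(C)$ may be freely swapped for a maximum independent set of $C$. The remaining ingredients are routine: the chordlessness of $C$ (already observed in the text), so that independence in $C$ and in $G[V(C)]$ agree, and the elementary equality $\alpha(C_{2k+1})=k$.
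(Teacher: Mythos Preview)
Your argument is correct and is precisely the natural unpacking of why the corollary follows from Proposition~\ref{prop44}: the paper states it without proof as an immediate consequence, and your exchange argument (swap $S\cap V(C)$ for a maximum independent set of $C$, using $S\cap N_{1}(C)=\emptyset$ to preserve independence) is exactly the intended route.
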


\begin{lemma}
\label{lem9}If $G$ is an almost bipartite non-K\"{o}nig-Egerv\'{a}ry graph,
then%
\[
\alpha(G)=%
{\textstyle\sum\limits_{y\in V(C)}^{{}}}
\alpha\left(  D_{y}\right)  -\left\lfloor \frac{\left\vert V(C)\right\vert
}{2}\right\rfloor -1,
\]
where $C$ is its unique odd cycle.
\end{lemma}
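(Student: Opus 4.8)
The plan is to compute $\alpha(G)=|S|$ for one carefully chosen maximum independent set $S$, by splitting $S$ over the partition $\{V(D_y):y\in V(C)\}$ of $V(G)$ (available since, by Proposition \ref{prop3}, the vertex sets of the bipartite blocks $D_y$ are pairwise disjoint and cover $V(G)$) and determining the contribution of $S$ to each block.

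First I would use Corollary \ref{cor3} to fix some $S\in\Omega(G)$ with $|S\cap V(C)|=\lfloor |V(C)|/2\rfloor$; writing $|V(C)|=2k+1$, this means $|S\cap V(C)|=k$ and $|V(C)\setminus S|=k+1$. Since the sets $V(D_y)$ partition $V(G)$,
\[
\alpha(G)=|S|=\sum_{y\in V(C)}|S\cap V(D_y)|,
\]
so it remains to evaluate each term $|S\cap V(D_y)|$.

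The heart of the argument is the claim that $|S\cap V(D_y)|=\alpha(D_y)$ when $y\in S$, and $|S\cap V(D_y)|=\alpha(D_y)-1$ when $y\notin S$. To see this, observe that $S\cap V(D_y-y)$ is the trace on $V(D_y-y)$ of a member of $\Omega(G)$, hence $S\cap V(D_y-y)\in\Omega(D_y-y)$ by Theorem \ref{th2}(iii), so $|S\cap V(D_y-y)|=\alpha(D_y-y)$. Next, because $G$ is not a K\"{o}nig-Egerv\'{a}ry graph, Proposition \ref{prop44} yields $y\in\mathrm{core}(D_y)$ for every $y\in V(C)$; since a vertex lying in every maximum independent set of a graph must drop the independence number when deleted, this gives $\alpha(D_y-y)=\alpha(D_y)-1$. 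Now if $y\notin S$ then $S\cap V(D_y)=S\cap V(D_y-y)$ has $\alpha(D_y)-1$ elements, while if $y\in S$ then $S\cap V(D_y)$ is the disjoint union of $\{y\}$ and $S\cap V(D_y-y)$, so it has $\alpha(D_y-y)+1=\alpha(D_y)$ elements.

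Summing over $V(C)$ then gives
\[
\alpha(G)=\sum_{y\in S\cap V(C)}\alpha(D_y)+\sum_{y\in V(C)\setminus S}\left(\alpha(D_y)-1\right)=\sum_{y\in V(C)}\alpha(D_y)-|V(C)\setminus S|=\sum_{y\in V(C)}\alpha(D_y)-(k+1),
\]
and since $k+1=\lfloor |V(C)|/2\rfloor+1$ this is exactly the stated identity. I expect no real obstacle here: the whole proof is an assembly of Corollary \ref{cor3}, Theorem \ref{th2}(iii), and Proposition \ref{prop44}, and the only point requiring a moment's care is confirming $\alpha(D_y-y)=\alpha(D_y)-1$ from $y\in\mathrm{core}(D_y)$, together with making sure the trace equality of Theorem \ref{th2}(iii) is invoked for the set $V(D_y-y)$ rather than $V(D_y)$.
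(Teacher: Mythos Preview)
Your proof is correct and follows essentially the same route as the paper's: choose $S\in\Omega(G)$ via Corollary~\ref{cor3}, use Proposition~\ref{prop44}\emph{(i)} to get $y\in\mathrm{core}(D_y)$, and sum the block contributions. You simply make explicit (via Theorem~\ref{th2}\emph{(iii)} and the observation $\alpha(D_y-y)=\alpha(D_y)-1$) the justification for $|S\cap V(D_y)|=\alpha(D_y)$ or $\alpha(D_y)-1$ that the paper's proof leaves to the reader.
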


\begin{proof}
By Corollary \ref{cor3}, there is a maximum independent set $S\in\Omega(G)$
such that $\left\vert S\cap V(C)\right\vert =\left\lfloor \frac{\left\vert
V(C)\right\vert }{2}\right\rfloor $. Therefore, by Proposition \ref{prop44}%
\emph{(i)},
\begin{gather*}
\alpha(G)=%
{\textstyle\sum\limits_{y\in S\cap V(C)}^{{}}}
\alpha\left(  D_{y}\right)  +%
{\textstyle\sum\limits_{y\in V(C)-S}^{{}}}
\left(  \alpha\left(  D_{y}\right)  -1\right) \\
=%
{\textstyle\sum\limits_{y\in V(C)}^{{}}}
\alpha\left(  D_{y}\right)  -\left\vert V(C)-S\right\vert =%
{\textstyle\sum\limits_{y\in V(C)}^{{}}}
\alpha\left(  D_{y}\right)  -\left\lfloor \frac{\left\vert V(C)\right\vert
}{2}\right\rfloor -1,
\end{gather*}
as required.
\end{proof}

\begin{proposition}
\label{prop2}If $G$ is an almost bipartite non-K\"{o}nig-Egerv\'{a}ry graph,
then every maximum matching of $G$ contains at least one edge belonging to its
unique odd cycle.
\end{proposition}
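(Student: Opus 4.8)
The plan is to argue by contradiction. Suppose $M$ is a maximum matching of $G$ with $M\cap E(C)=\emptyset$; I will show this forces $\left\vert M\right\vert <\mu(G)$, a contradiction.

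First I would collect the structural facts already at hand. Since $C$ is the unique odd cycle of $G$, the graph $G-E(C)$ is bipartite, and by Proposition \ref{prop3} together with the remark following it, its connected components are precisely the bipartite graphs $D_{y}$, $y\in V(C)$. Hence $G-E(C)$ is the disjoint union of the $D_{y}$'s, $\left\{ V(D_{y}):y\in V(C)\right\} $ is a partition of $V(G)$, so that $n(G)=\sum_{y\in V(C)}n(D_{y})$ and $\mu\left( G-E(C)\right) =\sum_{y\in V(C)}\mu(D_{y})$; moreover each $D_{y}$, being bipartite, is a K\"{o}nig-Egerv\'{a}ry graph, so $\mu(D_{y})=n(D_{y})-\alpha(D_{y})$.

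Next I would express $\mu(G)$ through the $D_{y}$'s. Because $G$ is almost bipartite and not K\"{o}nig-Egerv\'{a}ry, Proposition \ref{prop44}\emph{(iii)} (or Lemma \ref{lem2}\emph{(i)}) gives $\alpha(G)+\mu(G)=n(G)-1$, while Lemma \ref{lem9} gives $\alpha(G)=\sum_{y\in V(C)}\alpha(D_{y})-\left\lfloor \frac{\left\vert V(C)\right\vert }{2}\right\rfloor -1$. Substituting and using $n(G)=\sum_{y}n(D_{y})$ and $\mu(D_{y})=n(D_{y})-\alpha(D_{y})$ yields
\[
\mu(G)=\sum_{y\in V(C)}\mu(D_{y})+\left\lfloor \frac{\left\vert V(C)\right\vert }{2}\right\rfloor .
\]
The contradiction is then immediate: a maximum matching $M$ avoiding $E(C)$ is a matching of $G-E(C)$, so $\left\vert M\right\vert \leq\mu\left( G-E(C)\right) =\sum_{y}\mu(D_{y})=\mu(G)-\left\lfloor \frac{\left\vert V(C)\right\vert }{2}\right\rfloor <\mu(G)$, since $\left\vert V(C)\right\vert \geq3$, contradicting $\left\vert M\right\vert =\mu(G)$.

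I do not expect a genuinely hard step here. The only points requiring care are the justification that $G-E(C)$ decomposes as the disjoint union of the $D_{y}$'s — so that a matching of $G$ missing $E(C)$ really splits along this partition — and the bookkeeping that turns Lemma \ref{lem9} together with $\alpha(G)+\mu(G)=n(G)-1$ into the displayed identity for $\mu(G)$; both are routine consequences of results already in the paper. Equivalently, the whole statement amounts to $\mu\left( G-E(C)\right) <\mu(G)$, i.e., deleting the odd cycle's edges strictly decreases the matching number, so no maximum matching of $G$ can lie entirely outside $C$.
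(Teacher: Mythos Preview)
Your argument is correct and uses the same ingredients as the paper --- the decomposition of $G-E(C)$ into the bipartite pieces $D_{y}$, Lemma~\ref{lem9}, and the relation $\alpha(G)+\mu(G)=n(G)-1$ --- but you assemble them more efficiently. The paper splits into two cases (according to whether $C$ has two consecutive vertices $y$ with $D_{y}=\{y\}$), handling the first case by an ad hoc augmentation and deriving in the second case the numerical contradiction $k=k+\left\lfloor |V(C)|/2\right\rfloor$; your single identity $\mu(G)=\sum_{y\in V(C)}\mu(D_{y})+\left\lfloor |V(C)|/2\right\rfloor$ bypasses the case split entirely. In fact your identity yields more than the proposition asks: for any maximum matching $M$ of $G$, the set $M\setminus E(C)$ is a matching of $G-E(C)$, whence $|M\cap E(C)|\geq \mu(G)-\mu\left(G-E(C)\right)=\left\lfloor |V(C)|/2\right\rfloor$, which is precisely Conjecture~\ref{conj1} stated at the end of the paper.
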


\begin{proof}
Assume, to the contrary, that there exists some maximum matching $M$ of $G$,
such that $M\cap E(C)=\emptyset$.

\textit{Case 1. }There exist two consecutive vertices on $C$, say $y_{1}%
,y_{2}$, such that $D_{y_{1}}=\{y_{1}\}$ and $D_{y_{2}}=\{y_{2}\}$.

Since $G-y_{1}y_{2}$ is a bipartite graph, we have that%
\begin{gather*}
\alpha(G)+\mu(G)+1=n(G)=n(G-y_{1}y_{2})\\
=\alpha(G-y_{1}y_{2})+\mu(G-y_{1}y_{2})=\alpha(G)+1+\mu(G-y_{1}y_{2})
\end{gather*}
which leads to $\mu(G-y_{1}y_{2})=\mu(G)=|M|$. Since $M\cap E(C)=\emptyset$,
we infer that $M\cup\{y_{1}y_{2}\}$ is a matching in $G$, larger than $M$,
contradicting the fact that $\mu(G)=|M|$.

\textit{Case 2. }No two consecutive vertices on $C$, say $y_{1},y_{2}$,
satisfy both $D_{y_{1}}=\{y_{1}\}$ and $D_{y_{2}}=\{y_{2}\}$. It follows that
the number $k$ of vertices $y_{1},y_{2},...,y_{k}$ on $C$ with $D_{y_{i}%
}=\{y_{i}\}$ satisfies $k\leq\left\lfloor \frac{\left\vert V(C)\right\vert
}{2}\right\rfloor $.

Let $y_{k+1},y_{k+2},...,y_{k+p}$ be all the vertices on $C$ with $\left\vert
V(D_{y_{i}})\right\vert =n\left(  D_{y_{i}}\right)  \geq2$. Hence,
$p\geq\left\lfloor \frac{\left\vert V(C)\right\vert }{2}\right\rfloor $.

Since every $D_{y_{i}}$ is bipartite, we know that $n\left(  D_{y_{i}}\right)
=\alpha\left(  D_{y_{i}}\right)  +\mu\left(  D_{y_{i}}\right)  $. In addition,
$\mu(G)=%
{\textstyle\sum\limits_{i=k+1}^{k+p}}
\mu\left(  D_{y_{i}}\right)  $, because $M\cap E(C)=\emptyset$.

Thus%
\[
n(G)=%
{\textstyle\sum\limits_{i=1}^{k+p}}
n\left(  D_{y_{i}}\right)  =%
{\textstyle\sum\limits_{i=1}^{k}}
n\left(  D_{y_{i}}\right)  +%
{\textstyle\sum\limits_{i=k+1}^{k+p}}
n\left(  D_{y_{i}}\right)  =k+%
{\textstyle\sum\limits_{i=k+1}^{k+p}}
n\left(  D_{y_{i}}\right)  .
\]

Consequently, by Proposition \ref{prop44}\emph{(iii)} and Lemma \ref{lem9},%
\begin{align*}
n(G)  &  =\alpha(G)+\mu(G)+1=%
{\textstyle\sum\limits_{y\in V(C)}^{{}}}
\alpha\left(  D_{y}\right)  -\left\lfloor \frac{\left\vert V(C)\right\vert
}{2}\right\rfloor -1+\mu(G)+1\\
=  &
{\textstyle\sum\limits_{y\in V(C)}^{{}}}
\alpha\left(  D_{y}\right)  -\left\lfloor \frac{\left\vert V(C)\right\vert
}{2}\right\rfloor +\mu(G).
\end{align*}

On the other hand, we have%
\[
n(G)=k+%
{\textstyle\sum\limits_{i=k+1}^{k+p}}
n\left(  D_{y_{i}}\right)  =k+%
{\textstyle\sum\limits_{i=k+1}^{k+p}}
\alpha\left(  D_{y_{i}}\right)  +%
{\textstyle\sum\limits_{i=k+1}^{k+p}}
\mu\left(  D_{y_{i}}\right)  =k+%
{\textstyle\sum\limits_{i=k+1}^{k+p}}
\alpha\left(  D_{y_{i}}\right)  +\mu(G).
\]

Hence, we get
\begin{gather*}%
{\textstyle\sum\limits_{y\in V(C)}^{{}}}
\alpha\left(  D_{y}\right)  -\left\lfloor \frac{\left\vert V(C)\right\vert
}{2}\right\rfloor =k+%
{\textstyle\sum\limits_{i=k+1}^{k+p}}
\alpha\left(  D_{y_{i}}\right) \\%
{\textstyle\sum\limits_{i=1}^{k}}
\alpha\left(  D_{y_{i}}\right)  =k+\left\lfloor \frac{\left\vert
V(C)\right\vert }{2}\right\rfloor .
\end{gather*}

Taking into account that $%
{\textstyle\sum\limits_{i=1}^{k}}
\alpha\left(  D_{y_{i}}\right)  =k$ by definition of the sequence $y_{1}%
,y_{2},...,y_{k}$, we arrive at a contradiction.
\end{proof}

Proposition \ref{prop2} is not true for almost bipartite
K\"{o}nig-Egerv\'{a}ry graphs; e.g., the graphs in Figure \ref{Fig 11}.

\begin{figure}[h]
\setlength{\unitlength}{1cm}\begin{picture}(5,1.3)\thicklines
\multiput(2,0)(1,0){6}{\circle*{0.29}}
\multiput(3,1)(1,0){5}{\circle*{0.29}}
\put(2,0){\line(1,0){5}}
\put(3,1){\line(1,0){4}}
\put(3,0){\line(1,1){1}}
\put(5,0){\line(0,1){1}}
\put(1.2,0.5){\makebox(0,0){$G_{1}$}}
\multiput(9,0)(1,0){4}{\circle*{0.29}}
\multiput(9,1)(1,0){2}{\circle*{0.29}}
\put(12,1){\circle*{0.29}}
\put(9,0){\line(1,0){3}}
\put(9,1){\line(1,0){1}}
\put(10,0){\line(0,1){1}}
\put(10,1){\line(1,-1){1}}
\put(11,0){\line(1,1){1}}
\put(8.2,0.5){\makebox(0,0){$G_{2}$}}
\end{picture}\caption{$G_{1}$ and $G_{2}$ are almost bipartite
K\"{o}nig-Egerv\'{a}ry graphs}%
\label{Fig 11}%
\end{figure}
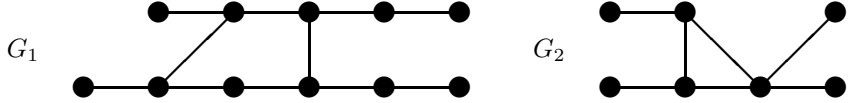

\begin{lemma}
\cite{LevMan2020}\label{lem8} Let $G$\ be an almost bipartite
non-K\"{o}nig-Egerv\'{a}ry graph with the unique odd cycle $C$.

\emph{(i)} If $A$ is a critical independent set, then $A\cap V(C)=\emptyset$.

\emph{(ii)} $\mathrm{core}(G)$ is a critical set.
\end{lemma}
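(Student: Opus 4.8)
The plan is to establish the two assertions in order, exploiting the partition $\{V(D_{y}):y\in V(C)\}$ of $V(G)$ (Proposition \ref{prop3}) and the fine structure recorded in Theorem \ref{th2} and Proposition \ref{prop44}; assertion (ii) will be deduced from assertion (i).

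For part (i) I would begin with the standard extension property of critical independent sets: if $A$ is critical independent, then $\alpha(G-N[A])=\alpha(G)-|A|$, so that $A\subseteq S$ for some $S\in\Omega(G)$. I would then combine Proposition \ref{prop44}(i) --- which gives $y\in\mathrm{core}(D_{y})$ for every $y\in V(C)$, hence $\alpha(D_{y}-y)=\alpha(D_{y})-1$ and, via Lemma \ref{lem0}, $\mu(D_{y}-y)=\mu(D_{y})$ --- with Theorem \ref{th2}(iii) and Lemma \ref{lem9} to show that in fact $|S\cap V(C)|=\lfloor|V(C)|/2\rfloor$ for \emph{every} $S\in\Omega(G)$. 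Now suppose, for contradiction, that $A$ is critical independent and $y_{0}\in A\cap V(C)$. Since $|A\cap V(C)|\le\lfloor|V(C)|/2\rfloor<|V(C)|/2$, some edge $e\in E(C)$ is incident with no vertex of $A$; by Lemma \ref{lem2}(ii) it is $\alpha$-critical, so $G-e$ is bipartite with $\alpha(G-e)=\alpha(G)+1$, and then $n(G)=n(G-e)=\alpha(G-e)+\mu(G-e)$ together with Lemma \ref{lem2}(i) forces $\mu(G-e)=\mu(G)$. Because both ends of $e$ lie outside $A$, we get $N_{G-e}(A)=N_{G}(A)$, hence $d_{G-e}(A)=d_{G}(A)=d(G)$.

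Deriving the contradiction from here is the delicate point, and I expect it to be the main obstacle. Using the known relation $d(H)=\alpha(H)-\mu(H)$ for bipartite (more generally K\"{o}nig-Egerv\'{a}ry) graphs $H$, one gets $d(G-e)=\alpha(G)+1-\mu(G)$, while $d(G)\ge\alpha(G)-\mu(G)$; these two facts, forced to be compatible with $d_{G-e}(A)=d(G)$, should pin down $A$ as a critical independent set of the bipartite graph $G-e$ that attains the extremal difference there. Reading this inside the connected component of $G-e$ containing $y_{0}$ and applying Lemma \ref{lem0}, one then contradicts $y_{0}\in\mathrm{core}(D_{y_{0}})$, i.e., the fact that $y_{0}$ is forced into every maximum independent set of $D_{y_{0}}$. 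This is exactly where the hypothesis that $G$ is \emph{not} a K\"{o}nig-Egerv\'{a}ry graph is used in an essential way.

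For part (ii) I would argue as follows, taking (i) for granted. The set $\mathrm{core}(G)$ is independent, and by Theorem \ref{th2}(i) it is disjoint from $N[V(C)]$, in particular from $V(C)\cup N_{1}(C)$; hence, by Theorem \ref{th2}(ii) and Proposition \ref{prop3}, $\mathrm{core}(G)=\bigcup_{y\in V(C)}\mathrm{core}(D_{y}-y)$ is a disjoint union, no edge of $C$ meets it, and $N_{G}(\mathrm{core}(G))=\bigcup_{y\in V(C)}N_{D_{y}-y}(\mathrm{core}(D_{y}-y))$ is disjoint as well, so $d_{G}(\mathrm{core}(G))=\sum_{y\in V(C)}d_{D_{y}-y}(\mathrm{core}(D_{y}-y))$. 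For each bipartite $D_{y}-y$, Theorem \ref{th3} gives $\mathrm{core}(D_{y}-y)=\ker(D_{y}-y)$, which is a critical set, so $d_{D_{y}-y}(\mathrm{core}(D_{y}-y))=d(D_{y}-y)$ and therefore $d_{G}(\mathrm{core}(G))=\sum_{y\in V(C)}d(D_{y}-y)$. It remains to prove $d(G)=\sum_{y\in V(C)}d(D_{y}-y)$: the inequality $\ge$ holds since $\mathrm{core}(G)$ is an admissible test set; for $\le$, pick an independent critical set $X$ of $G$ (possible as $d(G)=id(G)$), use (i) to get $X\cap V(C)=\emptyset$, note that then the edges of $C$ are irrelevant so $d_{G}(X)=\sum_{y\in V(C)}d_{D_{y}}(X\cap V(D_{y}))$, and observe $d_{D_{y}}(X\cap V(D_{y}))\le d(D_{y}-y)$ for each $y$ (routine, according as $X$ does or does not meet $N_{D_{y}}(y)$). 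Thus $d_{G}(\mathrm{core}(G))=d(G)$, i.e., $\mathrm{core}(G)$ is a critical set, which completes (ii).
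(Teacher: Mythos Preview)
This lemma is quoted from \cite{LevMan2020} and carries no proof in the present paper, so there is no in-paper argument to compare against; I can only assess your proposal on its own merits.

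Your argument for \emph{(ii)}, granted \emph{(i)}, is correct. Theorem~\ref{th2}\emph{(i)} guarantees that $\mathrm{core}(G)\cap N_{1}(C)=\emptyset$, so for each $y\in V(C)$ one has $N_{G}\bigl(\mathrm{core}(D_{y}-y)\bigr)=N_{D_{y}-y}\bigl(\mathrm{core}(D_{y}-y)\bigr)$, and the decomposition $d_{G}(\mathrm{core}(G))=\sum_{y}d(D_{y}-y)$ follows. For the upper bound on $d(G)$ your case split is indeed routine: if $X_{y}=X\cap V(D_{y})$ avoids $N_{D_{y}}(y)$ then $d_{D_{y}}(X_{y})=d_{D_{y}-y}(X_{y})\le d(D_{y}-y)$, while if it meets $N_{D_{y}}(y)$ then $y\in N_{D_{y}}(X_{y})$ and $d_{D_{y}}(X_{y})=d_{D_{y}-y}(X_{y})-1\le d(D_{y}-y)-1$.

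Your argument for \emph{(i)}, however, has a real gap exactly where you flag the ``delicate point''. From $d_{G-e}(A)=d(G)$, the bipartite identity $d(G-e)=\alpha(G)+1-\mu(G)$, and the general bound $d(G)\ge\alpha(G)-\mu(G)$, you can only conclude
\[
d_{G-e}(A)=d(G)\in\{\,d(G-e)-1,\ d(G-e)\,\};
\]
nothing forces the second alternative. In fact the opposite holds: by Theorem~\ref{th44} (proved in \cite{LevMan2020} \emph{after} the present lemma, so unavailable to you here) one has $d(G)=\alpha(G)-\mu(G)=d(G-e)-1$, i.e.\ $A$ is \emph{not} critical in $G-e$. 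Thus the step ``pin down $A$ as a critical independent set of $G-e$'' fails, and the subsequent appeal to Lemma~\ref{lem0} to contradict $y_{0}\in\mathrm{core}(D_{y_{0}})$ is left unsupported. You will need a different mechanism to derive the contradiction in \emph{(i)}; one workable route is to compare $d_{G}(A)$ directly with $d_{G}(\mathrm{core}(G))=\sum_{y}d(D_{y}-y)$ via the partition $\{V(D_{y})\}$, tracking how the cycle neighbours of $A\cap V(C)$ inflate $|N_{G}(A)|$, rather than passing through $G-e$.
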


\begin{lemma}
\label{lem5}Let $G$ be an almost bipartite graph. If there is $x\in N_{1}(C)$,
such that $x\in\mathrm{core}(D_{y}-y)$ for some $y\in V(C)$, then $G$ is a
K\"{o}nig-Egerv\'{a}ry graph.
\end{lemma}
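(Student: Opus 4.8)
The plan is to reduce everything to the characterization in Proposition~\ref{prop44}: $G$ is a K\"{o}nig-Egerv\'{a}ry graph precisely when it is \emph{not} true that $z\in\mathrm{core}(D_{z})$ for every $z\in V(C)$. Hence it suffices to produce a single cycle vertex failing this condition, and I will show that the given $y$ is such a vertex, i.e.\ that $y\notin\mathrm{core}(D_{y})$.

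The first step is to pin down how $x$ attaches to the cycle. Because $x\in N_{1}(C)$, there is an edge $xc$ with $c\in V(C)$; since $x\notin V(C)$, this edge is not an edge of $C$, so it survives in $G-E(C)$, and therefore $c$ lies in the same connected component of $G-E(C)$ as $x$, namely $D_{y}$. By Proposition~\ref{prop3} the sets $V(D_{z})$, $z\in V(C)$, partition $V(G)$ and each of them meets $V(C)$ only in its own vertex, so $c=y$ and $xy\in E(D_{y})$. I also record that $D_{y}$, being a connected component of the bipartite graph $G-E(C)$, is bipartite, and so is $D_{y}-y$; this is what lets me invoke Lemma~\ref{lem0} below.

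Next comes a short matching computation. Applying Lemma~\ref{lem0} to the bipartite graph $D_{y}-y$ and the vertex $x\in\mathrm{core}(D_{y}-y)$, there is a maximum matching $M$ of $D_{y}-y$ that does not saturate $x$. Then $M\cup\{xy\}$ is a matching of $D_{y}$ (no edge of $M$ touches $y$ or $x$), whence $\mu(D_{y})\ge\mu(D_{y}-y)+1$; combined with the trivial bound $\mu(D_{y})\le\mu(D_{y}-y)+1$, this forces $\mu(D_{y})=\mu(D_{y}-y)+1$. Consequently no maximum matching of $D_{y}$ can avoid $y$ (such a matching would live in $D_{y}-y$ and be too large), so Lemma~\ref{lem0}, applied this time to $D_{y}$, gives $y\notin\mathrm{core}(D_{y})$. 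Since condition (i) of Proposition~\ref{prop44} now fails, so does condition (iii), i.e.\ $G$ is a K\"{o}nig-Egerv\'{a}ry graph.

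The only step that really needs care is the first one: deducing from $x\in N_{1}(C)$ together with $x\in V(D_{y})$ that $x$ is adjacent to $y$ itself, rather than merely to some other vertex of $C$. This is exactly where Proposition~\ref{prop3} (equivalently, the fact that the $D_{z}$'s partition $V(G)$) is indispensable; once the edge $xy$ is secured, everything else is routine K\"{o}nig-type bookkeeping.
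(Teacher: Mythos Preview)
Your proof is correct, and it differs from the paper's in an interesting way. Both arguments start the same: after establishing $xy\in E(G)$ (which you justify carefully via Proposition~\ref{prop3}, while the paper simply asserts $y\in N(x)\cap V(C)$), one applies Lemma~\ref{lem0} to $D_{y}-y$ to get a maximum matching $M$ missing $x$, and then adjoins the edge $xy$.

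From that point the two proofs diverge. You stay local: $M\cup\{xy\}$ shows $\mu(D_{y})=\mu(D_{y}-y)+1$, so every maximum matching of $D_{y}$ saturates $y$; a second application of Lemma~\ref{lem0} (now to $D_{y}$) yields $y\notin\mathrm{core}(D_{y})$, and Proposition~\ref{prop44} finishes. The paper instead argues globally by contradiction: assuming $G$ is not K\"onig--Egerv\'ary, it invokes Lemma~\ref{lem8} to get $y\notin\mathrm{core}(G)$, hence $\alpha(G)=\alpha(G-y)$; it extends $M$ by a maximum matching of $G-D_{y}$ to a maximum matching of $G-y$, adjoins $xy$ to obtain $\mu(G)\ge\mu(G-y)+1$, and then compares with $\alpha(G-y)+\mu(G-y)=n(G)-1$ (since $G-y$ is bipartite) to reach the contradiction $n(G)-1\ge n(G)$. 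Your route is shorter and avoids both Lemma~\ref{lem8} and the claim that the combined matching is maximum in $G-y$; the paper's route makes the K\"onig--Egerv\'ary deficit $n(G)-\alpha(G)-\mu(G)$ visible in the arithmetic.
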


\begin{proof}
Let $x\in\mathrm{core}(D_{y}-y)$, $y\in N\left(  x\right)  \cap V(C)$, and
$z\in N\left(  y\right)  \cap V(C)$. Suppose, to the contrary, that $G$ is not
a K\"{o}nig-Egerv\'{a}ry graph. By\emph{ }Lemma \ref{lem2}, the edge $yz$ is
$\alpha$-critical. By Lemma \ref{lem8}, $y\notin\mathrm{core}(G)$. Thus it
follows that $\alpha(G)=\alpha(G-y)$. By Lemma \ref{lem0} there exists a
maximum matching $M_{x}$ of $D_{y}-y$ not saturating $x$. Combining $M_{x}$
with a maximum matching of $G-D_{y}$ we get a maximum matching $M_{y}$ of
$G-y$. Hence $M_{y}\cup\left\{  xy\right\}  $ is a matching of $G$, which
results in $\mu\left(  G\right)  \geq\mu\left(  G-y\right)  +1$. Consequently,
using Lemma \ref{lem2}\emph{(ii)} and having in mind that $G-y$ is a bipartite
graph of order $n(G)-1$, we get the following contradiction
\[
n(G)-1=\alpha(G)+\mu\left(  G\right)  \geq\alpha(G-y)+\mu\left(  G-y\right)
+1=n(G)-1+1=n(G),
\]
and this completes the proof.
\end{proof}

There exist K\"{o}nig-Egerv\'{a}ry and non-K\"{o}nig-Egerv\'{a}ry graphs
having $\mathrm{core}(G)\neq\mathrm{\ker}(G)$; e.g., the graphs from Figure
\ref{Fig8}: $\mathrm{core}\left(  G_{1}\right)  =\left\{  x,y,z\right\}  $ and
$\mathrm{core}\left(  G_{2}\right)  =\left\{  a,b,c\right\}  $.

\begin{figure}[h]
\setlength{\unitlength}{1cm}\begin{picture}(5,1.3)\thicklines
\multiput(2,0)(1,0){6}{\circle*{0.29}}
\put(6,1){\circle*{0.29}}
\multiput(2,1)(2,0){2}{\circle*{0.29}}
\put(2,0){\line(1,0){5}}
\put(2,1){\line(1,-1){1}}
\put(3,0){\line(1,1){1}}
\put(4,0){\line(0,1){1}}
\put(6,1){\line(1,-1){1}}
\put(6,0){\line(0,1){1}}
\put(1.7,0){\makebox(0,0){$x$}}
\put(1.7,1){\makebox(0,0){$y$}}
\put(5,0.3){\makebox(0,0){$z$}}
\put(1,0.5){\makebox(0,0){$G_{1}$}}
\multiput(9,0)(1,0){5}{\circle*{0.29}}
\multiput(9,1)(1,0){2}{\circle*{0.29}}
\put(12,1){\circle*{0.29}}
\put(9,0){\line(1,0){4}}
\put(9,0){\line(1,1){1}}
\put(9,1){\line(1,-1){1}}
\put(9,0){\line(0,1){1}}
\put(9,1){\line(1,0){1}}
\put(10,0){\line(0,1){1}}
\put(12,0){\line(0,1){1}}
\put(11,0.3){\makebox(0,0){$a$}}
\put(12.3,1){\makebox(0,0){$b$}}
\put(13,0.3){\makebox(0,0){$c$}}
\put(8.2,0.5){\makebox(0,0){$G_{2}$}}
\end{picture}\caption{ $\mathrm{\ker}(G_{1})=\left\{  x,y\right\}  $,
$\mathrm{\ker}(G_{2})=\left\{  b,c\right\}  $ and only $G_{1}$ is a
K\"{o}nig-Egerv\'{a}ry graph}%
\label{Fig8}%
\end{figure}

\begin{theorem}
\label{th3322}Let $G$\ be an almost bipartite non-K\"{o}nig-Egerv\'{a}ry graph
with the unique odd cycle $C$. Then
\[
\mathrm{\ker}\left(  G\right)  =%
{\displaystyle\bigcup\limits_{y\in V(C)}}
\mathrm{\ker}\left(  D_{y}-y\right)  =%
{\displaystyle\bigcup\limits_{y\in V(C)}}
\mathrm{core}\left(  D_{y}-y\right)  =\mathrm{core}\left(  G\right)  .
\]

\end{theorem}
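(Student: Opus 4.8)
I would split the three equalities. The middle one is essentially free: each $D_{y}-y$ is an induced subgraph of the bipartite graph $D_{y}$, hence bipartite, so Theorem \ref{th3}\emph{(ii)} gives $\mathrm{\ker}(D_{y}-y)=\mathrm{core}(D_{y}-y)$ for every $y\in V(C)$, and taking unions over $V(C)$ yields the middle equality. The right-hand equality $\bigcup_{y\in V(C)}\mathrm{core}(D_{y}-y)=\mathrm{core}(G)$ is precisely Theorem \ref{th2}\emph{(ii)}. Thus everything reduces to proving $\mathrm{\ker}(G)=\mathrm{core}(G)$, and since $\mathrm{\ker}(G)\subseteq\mathrm{core}(G)$ always holds (Theorem \ref{th3}\emph{(i)}), only the inclusion $\mathrm{core}(G)\subseteq\mathrm{\ker}(G)$ is left.

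For that inclusion the plan is to show that \emph{every} critical independent set $A$ of $G$ already contains $\mathrm{core}(G)$; intersecting over all such $A$ then gives $\mathrm{core}(G)\subseteq\mathrm{\ker}(G)$. So fix a critical independent set $A$. By Lemma \ref{lem8}\emph{(i)}, $A\cap V(C)=\emptyset$, and since $\{V(D_{y}):y\in V(C)\}$ is a partition of $V(G)$ (the remark after Proposition \ref{prop3}) with $V(D_{y})\cap V(C)=\{y\}$, I can write $A$ as the disjoint union $\bigcup_{y\in V(C)}A_{y}$ with $A_{y}:=A\cap V(D_{y})\subseteq V(D_{y}-y)$, independent in $D_{y}-y$. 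The crucial local fact is that a vertex of $A_{y}$ lies off the cycle $C$, hence is incident to no edge of $E(C)$, so all of its $G$-neighbours lie in $V(D_{y})$; consequently $N_{G}(A)=\bigcup_{y\in V(C)}N_{D_{y}}(A_{y})$ is a disjoint union and
\[
d_{G}(A)=\sum_{y\in V(C)}d_{D_{y}}(A_{y}).
\]
Moreover $y\notin A_{y}$, so passing from $D_{y}$ to $D_{y}-y$ can only delete $y$ from the neighbourhood of $A_{y}$; hence $d_{D_{y}}(A_{y})\le d_{D_{y}-y}(A_{y})\le d(D_{y}-y)$ for every $y\in V(C)$.

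Next I would pin down $d(G)$ itself. By Lemma \ref{lem8}\emph{(ii)}, $\mathrm{core}(G)$ is a critical set; by Theorem \ref{th2}, $\mathrm{core}(G)=\bigcup_{y\in V(C)}\mathrm{core}(D_{y}-y)$ and $\mathrm{core}(G)\cap N[V(C)]=\emptyset$, so no vertex of $\mathrm{core}(D_{y}-y)$ is adjacent to $y$. Running the same additivity argument on $\mathrm{core}(G)$, together with the fact that the core of the bipartite graph $D_{y}-y$ is one of its critical sets (Theorem \ref{th3}), yields $d(G)=d_{G}(\mathrm{core}(G))=\sum_{y\in V(C)}d(D_{y}-y)$. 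Combining this with the displayed identity, the bound $d_{D_{y}}(A_{y})\le d(D_{y}-y)$, and $d_{G}(A)=d(G)$, every inequality must be an equality; in particular $d_{D_{y}-y}(A_{y})=d(D_{y}-y)$, i.e.\ each $A_{y}$ is a critical independent set of $D_{y}-y$. Therefore $A_{y}\supseteq\mathrm{\ker}(D_{y}-y)=\mathrm{core}(D_{y}-y)$ (the kernel is contained in every critical independent set and coincides with the core for bipartite graphs), so $A=\bigcup_{y}A_{y}\supseteq\bigcup_{y}\mathrm{core}(D_{y}-y)=\mathrm{core}(G)$. Since $A$ was arbitrary, $\mathrm{core}(G)\subseteq\mathrm{\ker}(G)$.

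I expect the main obstacle to be the careful bookkeeping around the cycle vertices: making the decomposition $d_{G}(A)=\sum_{y}d_{D_{y}}(A_{y})$ genuinely exact (no neighbourhoods leaking between distinct $D_{y}$, which is exactly where Lemma \ref{lem8}\emph{(i)} and the partition property enter) and correctly accounting for the deleted vertex $y$ when moving between $D_{y}$ and $D_{y}-y$. Once those are nailed down, the comparison of $d(G)$ with $\sum_{y}d(D_{y}-y)$ and the resulting squeeze argument are routine.
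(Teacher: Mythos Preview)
Your proof is correct and rests on the same mechanism as the paper's: the additive decomposition of $d_G$ over the partition $\{V(D_y)\}_{y\in V(C)}$ together with the identity $d(G)=\sum_{y\in V(C)} d(D_y-y)$, followed by a squeeze forcing each local piece to be critical in $D_y-y$. The only difference is in framing: the paper restricts attention to $\ker(G)$ itself (already known to sit inside $\bigcup_y\ker(D_y-y)$), invokes Lemma~\ref{lem5} to get $N_G(A)=N_{D_y-y}(A)$ exactly for $A\subseteq\mathrm{core}(D_y-y)$, and derives a contradiction if $\ker(G)\subsetneq\bigcup_y\ker(D_y-y)$; you instead take an arbitrary critical independent set $A$, absorb the possible appearance of $y$ in $N_{D_y}(A_y)$ via the inequality $d_{D_y}(A_y)\le d_{D_y-y}(A_y)$, and use Theorem~\ref{th2}\emph{(i)} rather than Lemma~\ref{lem5} when evaluating $d(G)$ on $\mathrm{core}(G)$. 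Your version is slightly more direct (no contradiction) and proves the marginally stronger statement that every critical independent set contains $\mathrm{core}(G)$; the paper's version makes the exact equality of neighbourhoods on the local cores explicit. Substantively the two arguments coincide.
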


\begin{proof}
By Theorem \ref{th2}, we have that $\mathrm{core}\left(  G\right)  =%
{\displaystyle\bigcup\limits_{y\in V(C)}}
\mathrm{core}\left(  D_{y}-y\right)  $.

Since every $D_{y}-y$ is a bipartite graph, we infer that $\mathrm{\ker
}\left(  D_{y}-y\right)  =\mathrm{core}\left(  D_{y}-y\right)  $, by Theorem
\ref{th3}\emph{(ii)}.

Consequently, we obtain%

\[
\mathrm{core}\left(  G\right)  =%
{\displaystyle\bigcup\limits_{y\in V(C)}}
\mathrm{core}\left(  D_{y}-y\right)  =%
{\displaystyle\bigcup\limits_{y\in V(C)}}
\mathrm{\ker}\left(  D_{y}-y\right)  .
\]

By Lemma \ref{lem8}\emph{(ii)}, the set $\mathrm{core}\left(  G\right)  $ is
critical in $G$. Hence, we get that
\[
\mathrm{\ker}\left(  G\right)  \subseteq\mathrm{core}\left(  G\right)  =%
{\displaystyle\bigcup\limits_{y\in V(C)}}
\mathrm{\ker}\left(  D_{y}-y\right)  .
\]
Thus it is enough to show that
\[%
{\displaystyle\bigcup\limits_{y\in V(C)}}
\mathrm{\ker}\left(  D_{y}-y\right)  \subseteq\mathrm{\ker}\left(  G\right)
.
\]
In other words, $\mathrm{\ker}\left(  D_{y}-y\right)  $ $\subseteq
\mathrm{\ker}\left(  G\right)  |_{V\left(  D_{y}-y\right)  }$ for every $y\in
V\left(  C\right)  $, which is equivalent to the fact that $\mathrm{\ker
}\left(  G\right)  |_{V\left(  D_{y}-y\right)  }$ is critical in $D_{y}-y$.

By Lemma \ref{lem5}, if $A\subseteq\mathrm{core}\left(  D_{y}-y\right)  $,
then $N_{G}(A)=N_{D_{y}-y}(A)$, since $G$\ is a non-Konig-Egervary almost
bipartite graph. Hence it follows $d_{G}(A)=d_{D_{y}-y}(A)$ for every
$A\subseteq\mathrm{\ker}\left(  D_{y}-y\right)  $. Thus, in accordance with
Theorem \ref{th3}\emph{(i)}, if $A\subset\mathrm{\ker}\left(  D_{y}-y\right)
$, then
\begin{equation}
d_{G}(A)=d_{D_{y}-y}(A)<d_{D_{y}-y}\left(  \mathrm{\ker}\left(  D_{y}%
-y\right)  \right)  =d_{G}\left(  \mathrm{\ker}\left(  D_{y}-y\right)
\right)  . \tag{*}\label{*}%
\end{equation}

Since $\mathrm{\ker}\left(  G\right)  \subseteq%
{\displaystyle\bigcup\limits_{y\in V(C)}}
\mathrm{\ker}\left(  D_{y}-y\right)  $,
\begin{gather*}
d_{G}\left(  \mathrm{\ker}(G)\right)  =d_{G}\left(  \mathrm{\ker}\left(
G\right)  \cap%
{\displaystyle\bigcup\limits_{y\in V(C)}}
\mathrm{\ker}\left(  D_{y}-y\right)  \right) \\
=d_{G}\left(
{\displaystyle\bigcup\limits_{y\in V(C)}}
\left(  \mathrm{\ker}\left(  D_{y}-y\right)  \cap\mathrm{\ker}\left(
G\right)  \right)  \right)  =%
{\displaystyle\sum\limits_{y\in V(C)}}
d_{G}\left(  \mathrm{\ker}\left(  D_{y}-y\right)  \cap\mathrm{\ker}\left(
G\right)  \right)  .
\end{gather*}

If $\mathrm{\ker}\left(  G\right)  \neq%
{\displaystyle\bigcup\limits_{y\in V(C)}}
\mathrm{\ker}\left(  D_{y}-y\right)  $, then
\[
\mathrm{\ker}\left(  G\right)  |_{V\left(  D_{y}-y\right)  }\subset
\mathrm{\ker}\left(  D_{y}-y\right)
\]
for some $y\in V(C)$. Consequently, using the inequality (*) for
$A=\mathrm{\ker}\left(  G\right)  |_{V\left(  D_{y}-y\right)  }$, we obtain
\begin{gather*}
d_{G}\left(  \mathrm{\ker}(G)\right)  =%
{\displaystyle\sum\limits_{y\in V(C)}}
d_{G}\left(  \mathrm{\ker}\left(  G\right)  |_{V\left(  D_{y}-y\right)
}\right) \\
<%
{\displaystyle\sum\limits_{y\in V(C)}}
d_{G}\left(  \mathrm{\ker}\left(  D_{y}-y\right)  \right)  =d_{G}\left(
{\displaystyle\bigcup\limits_{y\in V(C)}}
\mathrm{\ker}\left(  D_{y}-y\right)  \right)  =d\left(  \mathrm{core}\left(
G\right)  \right)  =d(G),
\end{gather*}
which stays in contradiction with the fact that $\mathrm{\ker}(G)$ is critical
in $G$.
\end{proof}

As a consequence, we get the following.

\begin{corollary}
\cite{LevMan2014} If $G$\ is a unicyclic non-K\"{o}nig-Egerv\'{a}ry graph,
then $\mathrm{\ker}\left(  G\right)  =\mathrm{core}\left(  G\right)  $.
\end{corollary}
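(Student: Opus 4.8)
The plan is to deduce this directly from Theorem \ref{th3322}, by first checking that a unicyclic non-K\"onig-Egerv\'ary graph is automatically an almost bipartite non-K\"onig-Egerv\'ary graph, and then quoting that theorem verbatim.

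First I would analyze the parity of the unique cycle. Let $G$ be unicyclic with unique cycle $C$. If $C$ were an even cycle, then $G$ would contain no odd cycle whatsoever, hence $G$ would be bipartite, and therefore a K\"onig-Egerv\'ary graph by \cite{eger,koen} --- contradicting the hypothesis that $G$ is not K\"onig-Egerv\'ary. So $C$ must be an odd cycle; and since $C$ is the only cycle of $G$, it is in particular the only odd cycle of $G$. Thus $G$ is an almost bipartite graph whose unique odd cycle is $C$ (this is exactly the remark, recorded in Section 1, that every unicyclic graph with an odd cycle is almost bipartite). Moreover $G$ remains non-K\"onig-Egerv\'ary by assumption.

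Having made this identification, the second step is simply to apply Theorem \ref{th3322} to $G$ with its unique odd cycle $C$: it gives $\mathrm{\ker}\left(G\right)=\bigcup_{y\in V(C)}\mathrm{\ker}\left(D_{y}-y\right)=\bigcup_{y\in V(C)}\mathrm{core}\left(D_{y}-y\right)=\mathrm{core}\left(G\right)$, and in particular $\mathrm{\ker}(G)=\mathrm{core}(G)$, which is precisely the claim.

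There is essentially no obstacle: the only point that needs any care is ruling out the possibility that a unicyclic non-K\"onig-Egerv\'ary graph fails to be almost bipartite, and the parity argument above settles this cleanly. Everything substantive has already been done inside the proof of Theorem \ref{th3322} (which itself rests on Theorem \ref{th2}, Lemma \ref{lem5}, Lemma \ref{lem8}, and the bipartite case of Theorem \ref{th3}\emph{(ii)}), so the corollary is genuinely immediate once the reduction is in place.
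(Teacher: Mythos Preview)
Your proposal is correct and matches the paper's approach: the paper presents this corollary with no proof beyond the phrase ``As a consequence, we get the following,'' treating it as immediate from Theorem~\ref{th3322}. Your parity argument showing that the unique cycle must be odd (else $G$ would be bipartite and hence K\"onig-Egerv\'ary) is exactly the implicit reduction the paper is relying on, just made explicit.
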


It is easy to see that for every non-negative integer $k$ there exits a graph
$G$ with $\left\vert \mathrm{core}(G)\right\vert =k$. For instance,
$\left\vert \mathrm{core}(K_{3})\right\vert =0$, while the graph $G$, obtained
from $K_{3}$ by joining $k\geq1$ leaves to one of the vertices of $K_{3}$, has
$\left\vert \mathrm{core}(G)\right\vert =k$.

\begin{proposition}
\label{prop11}\cite{levm3} If $G$ is a connected bipartite graph of order at
least two, then $\left\vert \mathrm{core}(G)\right\vert \neq1$.
\end{proposition}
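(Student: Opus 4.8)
The plan is to argue by contradiction. Assume $\mathrm{core}(G)=\{v\}$ for a single vertex $v$, and produce a second vertex that is forced to lie in $\mathrm{core}(G)$. The engine of the argument is Lemma \ref{lem0}, available since $G$ is bipartite: a vertex lies in $\mathrm{core}(G)$ exactly when some maximum matching misses it, and hence a vertex lies \emph{outside} $\mathrm{core}(G)$ exactly when \emph{every} maximum matching saturates it.

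First I would extract a convenient maximum matching. Since $v\in\mathrm{core}(G)$, Lemma \ref{lem0} yields a maximum matching $M$ that does not saturate $v$; since every $u\neq v$ satisfies $u\notin\mathrm{core}(G)$, Lemma \ref{lem0} forces every such $u$ to be $M$-saturated. Thus $v$ is the \emph{unique} $M$-exposed vertex. Next, connectedness together with $n(G)\geq 2$ guarantees that $v$ has a neighbour $b$; as $b\neq v$, the vertex $b$ is matched by $M$, say via an edge $ab\in M$, and $a\neq v$ because $v$ is $M$-exposed. Then the swapped set $M'=(M\setminus\{ab\})\cup\{vb\}$ is again a matching (the only new edge at $v$ is $vb$, and the only $M$-edge at $b$ was removed) of the same cardinality, hence a maximum matching, and it leaves $a$ unsaturated. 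Applying Lemma \ref{lem0} once more gives $a\in\mathrm{core}(G)$ with $a\neq v$, contradicting $\mathrm{core}(G)=\{v\}$.

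The argument is short, so there is no real obstacle; the points needing care are purely bookkeeping: checking $a\neq v$ (immediate, since $v$ is $M$-exposed while $a$ is $M$-matched), checking that $M'$ genuinely is a matching of size $\mu(G)$, and — the one load-bearing use of the hypothesis $\lvert\mathrm{core}(G)\rvert=1$ — checking that $b$ is $M$-saturated, which is exactly where the conclusion ``$v$ is the unique $M$-exposed vertex'' is invoked. One could alternatively recast the whole argument in the language of minimum vertex covers (writing $\mathrm{core}(G)$ as the complement of the union of all minimum vertex covers via Gallai's identity and K\"onig's theorem), but the matching-swap version above is the most direct and uses only Lemma \ref{lem0}.
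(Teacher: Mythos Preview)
Your argument is correct. The matching-swap step is sound: $v$ is $M$-exposed and $b$'s only $M$-edge is $ab$, so $M'=(M\setminus\{ab\})\cup\{vb\}$ is indeed a matching of size $\mu(G)$ that leaves $a$ unsaturated, and Lemma~\ref{lem0} then forces $a\in\mathrm{core}(G)$ with $a\neq v$.

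Note, however, that the present paper does not supply its own proof of Proposition~\ref{prop11}; the result is quoted from \cite{levm3} and used as a black box. So there is no ``paper's proof'' here to compare against. Your proof is a clean, self-contained derivation from Lemma~\ref{lem0} alone, which is entirely appropriate given the tools the paper has on hand. The alternative you mention via K\"onig's theorem and minimum vertex covers would work too, but the direct matching-swap is simpler and keeps the dependency graph minimal.
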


\begin{corollary}
If $G$ is an almost bipartite non-K\"{o}nig-Egerv\'{a}ry graph, then
$\left\vert \mathrm{core}(G)\right\vert \neq1$.
\end{corollary}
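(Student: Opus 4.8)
The plan is to push the problem down to the bipartite ``blocks'' $D_y-y$ and then quote Proposition \ref{prop11}. Suppose, for a contradiction, that $\mathrm{core}(G)=\{v\}$. By Theorem \ref{th2}\emph{(ii)}, $\mathrm{core}(G)=\bigcup_{y\in V(C)}\mathrm{core}(D_y-y)$, and since $\{V(D_y):y\in V(C)\}$ is a partition of $V(G)$ (the remark following Proposition \ref{prop3}) the sets $\mathrm{core}(D_y-y)$ lie in pairwise disjoint vertex sets. Hence $1=|\mathrm{core}(G)|=\sum_{y\in V(C)}|\mathrm{core}(D_y-y)|$, so there is a unique $y_0\in V(C)$ with $\mathrm{core}(D_{y_0}-y_0)=\{v\}$.

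Next I would analyse the bipartite graph $D_{y_0}-y_0$. It is nonempty (it contains $v$), so $n(D_{y_0})\geq2$. Since the core of a graph is the union of the cores of its connected components, lying in disjoint vertex sets, the component $H$ of $D_{y_0}-y_0$ that contains $v$ satisfies $\mathrm{core}(H)=\{v\}$. As $H$ is a connected bipartite graph with $|\mathrm{core}(H)|=1$, Proposition \ref{prop11} forces $n(H)=1$; that is, $v$ is an isolated vertex of $D_{y_0}-y_0$. Because $D_{y_0}$ is connected with at least two vertices, $v$ must have a neighbour in $D_{y_0}$, and that neighbour can only be $y_0$; thus $vy_0\in E(D_{y_0})\subseteq E(G)$ and therefore $v\in N_G(V(C))$.

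This contradicts Theorem \ref{th2}\emph{(i)}, which gives $\mathrm{core}(G)\cap N_G[V(C)]=\emptyset$, whereas $v\in\mathrm{core}(G)\cap N_G[V(C)]$. The only delicate point is that $D_{y_0}-y_0$ need not be connected, so one has to pass to the component of $v$ before invoking Proposition \ref{prop11}; everything else is bookkeeping with the partition $\{V(D_y)\}$ and additivity of cardinalities over disjoint unions. The non-K\"{o}nig-Egerv\'{a}ry hypothesis is indispensable -- it is exactly what makes Theorem \ref{th2} available -- and is genuinely needed: the almost bipartite K\"{o}nig-Egerv\'{a}ry graph $G_1$ of Figure \ref{Fig4} has $|\mathrm{core}(G_1)|=1$.
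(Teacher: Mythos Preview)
Your proof is correct and follows essentially the same route as the paper: decompose $\mathrm{core}(G)$ as the disjoint union $\bigcup_{y\in V(C)}\mathrm{core}(D_y-y)$ and then invoke Proposition~\ref{prop11} on the bipartite pieces. The paper's own argument simply asserts that $|\mathrm{core}(D_y-y)|\neq 1$ ``since $D_y-y$ is bipartite'', applying Proposition~\ref{prop11} directly; you are right to be more careful here, because Proposition~\ref{prop11} requires connectedness and order at least two, and $D_{y_0}-y_0$ need not be connected. Your extra step---passing to the component $H$ of $v$, forcing $H=\{v\}$, and then using Theorem~\ref{th2}\emph{(i)} (equivalently Lemma~\ref{lem5}) to rule out $v\in N_1(C)$---fills exactly the gap the paper glosses over, so your version is in fact the more complete of the two.
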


\begin{proof}
Clearly, if $G=C_{2k+1}$, then $\mathrm{core}(G)=\emptyset$. If $G\neq
C_{2k+1}$, then, by Theorem \ref{th3322}, we have that
\[%
{\displaystyle\bigcup\limits_{y\in V(C)}}
\mathrm{core}\left(  D_{y}-y\right)  =\mathrm{core}\left(  G\right)  ,
\]
while by Proposition \ref{prop11}, we know that $\left\vert \mathrm{core}%
(D_{y}-y)\right\vert \neq1$ for each $y\in V(C)$, since $D_{y}-y$\ is
bipartite. Hence we finally get $\left\vert \mathrm{core}(G)\right\vert \neq1$.
\end{proof}

\begin{corollary}
\cite{LevMan2014} If $G$ is a unicyclic non-K\"{o}nig-Egerv\'{a}ry graph, then
$\left\vert \mathrm{core}(G)\right\vert \neq1$.
\end{corollary}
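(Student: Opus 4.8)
The plan is to derive this corollary as an immediate specialization of the preceding one (almost bipartite non-K\"{o}nig-Egerv\'{a}ry graphs have $\left\vert \mathrm{core}(G)\right\vert \neq 1$), so the only real work is to show that a unicyclic non-K\"{o}nig-Egerv\'{a}ry graph is in fact almost bipartite. First I would dispose of the even-cycle case: if $G$ is unicyclic and its unique cycle is even, then $G$ contains no odd cycle at all, hence $G$ is bipartite, hence $G$ is a K\"{o}nig-Egerv\'{a}ry graph by the classical fact (cited in the Introduction as \cite{eger,koen}) that every bipartite graph is K\"{o}nig-Egerv\'{a}ry. This contradicts the hypothesis, so the unique cycle of $G$ must be odd.

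Consequently $G$ has exactly one odd cycle, i.e., $G$ is almost bipartite; this is exactly the observation already recorded in the excerpt that every unicyclic graph with an odd cycle is almost bipartite. Since $G$ is moreover non-K\"{o}nig-Egerv\'{a}ry by assumption, the preceding corollary applies verbatim and yields $\left\vert \mathrm{core}(G)\right\vert \neq 1$, which is the claim.

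There is essentially no obstacle here: the statement is a strict corollary of the previous one, and the only thing to verify carefully is the short logical chain ``unique even cycle $\Rightarrow$ bipartite $\Rightarrow$ K\"{o}nig-Egerv\'{a}ry,'' whose contrapositive forces the unique cycle to be odd. One could alternatively bypass the appeal to the almost-bipartite corollary and argue directly as in its proof --- noting that $\mathrm{core}(C_{2k+1}) = \emptyset$, while for $G \neq C_{2k+1}$ Theorem \ref{th3322} gives $\mathrm{core}(G) = \bigcup_{y \in V(C)} \mathrm{core}(D_{y}-y)$ with each $D_{y}-y$ bipartite, so Proposition \ref{prop11} rules out a core of size one --- but reusing the corollary is cleaner and avoids repeating that argument.
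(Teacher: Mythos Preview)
Your proposal is correct and matches the paper's approach: the paper gives no explicit proof for this corollary at all, treating it as an immediate specialization of the preceding almost-bipartite corollary, and you have correctly supplied the one missing observation (that the unique cycle must be odd, since an even unique cycle would make $G$ bipartite and hence K\"{o}nig-Egerv\'{a}ry).
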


There exist non-bipartite K\"{o}nig-Egerv\'{a}ry graphs and
non-K\"{o}nig-Egerv\'{a}ry graphs that have $\left\vert \mathrm{core}%
(G)\right\vert =1$; e.g., the graph $G_{1}$ in Figure \ref{Fig4}\ and the
graphs in Figure \ref{Fig7}.

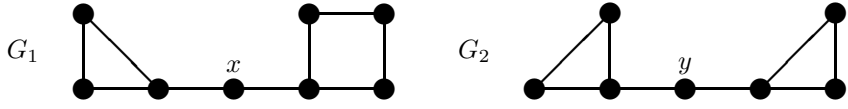
\begin{figure}[h]
\setlength{\unitlength}{1cm}\begin{picture}(5,1.3)\thicklines
\multiput(2,0)(1,0){5}{\circle*{0.29}}
\put(2,1){\circle*{0.29}}
\multiput(5,1)(1,0){2}{\circle*{0.29}}
\put(2,0){\line(1,0){4}}
\put(2,0){\line(0,1){1}}
\put(2,1){\line(1,-1){1}}
\put(5,0){\line(0,1){1}}
\put(5,1){\line(1,0){1}}
\put(6,0){\line(0,1){1}}
\put(4,0.3){\makebox(0,0){$x$}}
\put(1.2,0.5){\makebox(0,0){$G_{1}$}}
\multiput(8,0)(1,0){5}{\circle*{0.29}}
\multiput(9,1)(3,0){2}{\circle*{0.29}}
\put(8,0){\line(1,0){4}}
\put(8,0){\line(1,1){1}}
\put(9,0){\line(0,1){1}}
\put(11,0){\line(1,1){1}}
\put(12,0){\line(0,1){1}}
\put(10,0.3){\makebox(0,0){$y$}}
\put(7.2,0.5){\makebox(0,0){$G_{2}$}}
\end{picture}\caption{ \textrm{core}$(G_{1})=\left\{  x\right\}  $,
\textrm{core}$(G_{2})=\left\{  y\right\}  $ and only $G_{1}$ is a
K\"{o}nig-Egerv\'{a}ry graph}%
\label{Fig7}%
\end{figure}

It is worth noticing that there exists an almost bipartite
K\"{o}nig-Egerv\'{a}ry graph with a critical independent set meeting its
unique cycle. For instance, the bull graph.

There exist non-K\"{o}nig-Egerv\'{a}ry graphs satisfying $\mathrm{corona}%
\left(  G\right)  \cup$ $N(\mathrm{core}(G))\neq V(G)$; e.g., the graph in
Figure \ref{Fig9} has $\mathrm{corona}\left(  G\right)  \cup$ $N(\mathrm{core}%
(G))=V(G)-\{a\}$. \begin{figure}[h]
\setlength{\unitlength}{1cm}\begin{picture}(5,1.2)\thicklines
\multiput(4,0)(1,0){6}{\circle*{0.29}}
\put(4,1){\circle*{0.29}}
\multiput(6,1)(1,0){3}{\circle*{0.29}}
\put(4,0){\line(1,0){5}}
\put(4,0){\line(0,1){1}}
\put(4,1){\line(1,-1){1}}
\put(5,0){\line(1,1){1}}
\put(6,1){\line(1,0){1}}
\put(7,0){\line(0,1){1}}
\put(8,0){\line(0,1){1}}
\put(5,0.3){\makebox(0,0){$a$}}
\put(8.3,1){\makebox(0,0){$b$}}
\put(9.3,0){\makebox(0,0){$c$}}
\put(3.2,1){\makebox(0,0){$G$}}
\end{picture}\caption{$G$ is a non-K\"{o}nig-Egerv\'{a}ry graph with
\textrm{core}$(G)=\left\{  b,c\right\}  $}%
\label{Fig9}%
\end{figure}
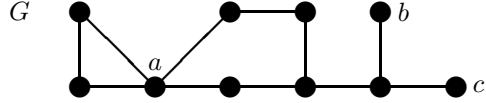

\begin{theorem}
\label{th333}If $G$ is an almost bipartite non-K\"{o}nig-Egerv\'{a}ry graph, then

\emph{(i)} $\mathrm{corona}\left(  G\right)  \cup$ $N(\mathrm{core}(G))=V(G)$;

\emph{(ii)} $\mathrm{corona}(G)$ $=V(C)\cup\left(
{\displaystyle\bigcup\limits_{y\in V(C)}}
\mathrm{corona}(D_{y}-y)\right)  $.
\end{theorem}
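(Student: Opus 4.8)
The plan is to establish \emph{(ii)} first and then deduce \emph{(i)} from it, exploiting that each block $D_{y}-y$ is bipartite, hence K\"{o}nig-Egerv\'{a}ry, so Theorem \ref{th1} applies to it. For \emph{(ii)}, the inclusion ``$\subseteq$'' is immediate: if $v$ belongs to some $S\in\Omega(G)$ and $v\notin V(C)$, then $v\in V(D_{y}-y)$ for the unique $y\in V(C)$ with $v\in V(D_{y})$ (the sets $V(D_{y})$ partition $V(G)$ by Proposition \ref{prop3}, and $V(D_{y})\cap V(C)=\{y\}$), and $S\cap V(D_{y}-y)\in\Omega(D_{y}-y)$ by Theorem \ref{th2}\emph{(iii)}, so $v\in\mathrm{corona}(D_{y}-y)$. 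The core of the argument is the reverse inclusion, which I would reduce to one gluing statement: \emph{for every $y_{0}\in V(C)$ and every $S_{0}$ that is a maximum independent set of $D_{y_{0}}$, or of $D_{y_{0}}-y_{0}$, there exists $S\in\Omega(G)$ with $S\cap V(D_{y_{0}})=S_{0}$.} Granting this, taking $y_{0}=v$ and $S_{0}\in\Omega(D_{y_{0}})$ gives $v=y_{0}\in S_{0}\subseteq S$, since $y_{0}\in\mathrm{core}(D_{y_{0}})$ by Proposition \ref{prop44}\emph{(i)} (this is where the non-K\"{o}nig-Egerv\'{a}ry hypothesis enters); hence $V(C)\subseteq\mathrm{corona}(G)$. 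Likewise, for $v\in\mathrm{corona}(D_{y_{0}}-y_{0})$ one picks $S_{0}\in\Omega(D_{y_{0}}-y_{0})$ with $v\in S_{0}$ and gets $v\in\mathrm{corona}(G)$. Combined with ``$\subseteq$'', this proves \emph{(ii)}.

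To prove the gluing statement, recall that the only edges of $G$ joining distinct blocks $V(D_{y})$ and $V(D_{y'})$ are the edges of $C$ (joining $y$ to $y'$), and that $V(D_{y})\cap V(C)=\{y\}$. Since $G$ is non-K\"{o}nig-Egerv\'{a}ry, $y\in\mathrm{core}(D_{y})$, hence $\alpha(D_{y}-y)=\alpha(D_{y})-1$, for every $y\in V(C)$. Choose a $\lfloor|V(C)|/2\rfloor$-element independent set $I_{C}$ of the odd cycle $C$ with $y_{0}\in I_{C}$ when $S_{0}\in\Omega(D_{y_{0}})$ and with $y_{0}\notin I_{C}$ when $S_{0}\in\Omega(D_{y_{0}}-y_{0})$ (both choices exist on $C_{2k+1}$). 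For $y'\in I_{C}\setminus\{y_{0}\}$ pick $S_{y'}\in\Omega(D_{y'})$ with $y'\in S_{y'}$ (possible since $y'\in\mathrm{core}(D_{y'})$), for $y'\in V(C)\setminus(I_{C}\cup\{y_{0}\})$ pick $S_{y'}\in\Omega(D_{y'}-y')$, and set $S_{y_{0}}=S_{0}$; let $S=\bigcup_{y'\in V(C)}S_{y'}$. Then $S\cap V(C)=I_{C}$ by the choice of $I_{C}$, which is independent in $C$, and each $S_{y'}$ is independent inside $D_{y'}$, so $S$ is independent in $G$. A short count, using $\alpha(D_{y'}-y')=\alpha(D_{y'})-1$ for $y'\notin I_{C}$, gives $|S|=\sum_{y'\in V(C)}\alpha(D_{y'})-|V(C)\setminus I_{C}|=\sum_{y'\in V(C)}\alpha(D_{y'})-\lfloor|V(C)|/2\rfloor-1$, which equals $\alpha(G)$ by Lemma \ref{lem9}. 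Hence $S\in\Omega(G)$.

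For \emph{(i)}, let $v\in V(G)$. If $v\in V(C)$, then $v\in\mathrm{corona}(G)$ by \emph{(ii)}. Otherwise $v\in V(D_{y}-y)$ for some $y\in V(C)$; since $D_{y}-y$ is bipartite, hence K\"{o}nig-Egerv\'{a}ry, Theorem \ref{th1}\emph{(i)} gives $v\in\mathrm{corona}(D_{y}-y)$ or $v$ has a neighbour $a\in\mathrm{core}(D_{y}-y)$ within $D_{y}-y$. In the first case $v\in\mathrm{corona}(G)$ by \emph{(ii)}; in the second, $va\in E(G)$ and $a\in\mathrm{core}(G)$ by Theorem \ref{th2}\emph{(ii)}, so $v\in N(\mathrm{core}(G))$. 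In either case $v\in\mathrm{corona}(G)\cup N(\mathrm{core}(G))$, which establishes \emph{(i)}.

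The step I expect to be the main obstacle is the size computation in the gluing statement: it is precisely where the hypothesis that $G$ is not K\"{o}nig-Egerv\'{a}ry is used, through $y\in\mathrm{core}(D_{y})$ and Lemma \ref{lem9}, and it is what fails for almost bipartite K\"{o}nig-Egerv\'{a}ry graphs such as those in the figures. Two minor technical points to watch are the existence of both required types of independent set $I_{C}$ on the odd cycle $C_{2k+1}$, and the degenerate block $D_{y'}=\{y'\}$, for which $\alpha(D_{y'}-y')=0=\alpha(D_{y'})-1$ so the count is unaffected.
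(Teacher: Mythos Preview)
Your argument is correct, but it differs from the paper's in structure and in one key step. The paper proves \emph{(i)} first and then \emph{(ii)}, whereas you do the reverse; more substantively, to show $V(C)\subseteq\mathrm{corona}(G)$ the paper simply invokes Lemma~\ref{lem2}\emph{(ii)}: every edge $ab$ of $C$ is $\alpha$-critical, so any $S'\in\Omega(G-ab)$ has $|S'|=\alpha(G)+1$ and must contain both $a$ and $b$, whence $S'\setminus\{b\}\in\Omega(G)$ contains $a$. That is a one-line argument. You instead build an explicit gluing of local maximum independent sets and verify its size via Lemma~\ref{lem9} and Proposition~\ref{prop44}\emph{(i)}. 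Your construction is sound, but note that half of your gluing statement---the case $S_{0}\in\Omega(D_{y_{0}}-y_{0})$---is already contained in Theorem~\ref{th2}\emph{(iii)}, which asserts the full equality $\Omega(G)|_{V(D_{y}-y)}=\Omega(D_{y}-y)$ and hence directly gives $\mathrm{corona}(D_{y}-y)\subseteq\mathrm{corona}(G)$; only the $S_{0}\in\Omega(D_{y_{0}})$ case actually needs new work. What your approach buys is a self-contained, constructive proof that makes explicit which $S\in\Omega(G)$ contains a given cycle vertex; what the paper's approach buys is brevity, leaning on the $\alpha$-criticality characterization of non-K\"{o}nig-Egerv\'{a}ry almost bipartite graphs. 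Your treatment of \emph{(i)} is essentially identical to the paper's Case~2.
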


\begin{proof}
\emph{(i) }It is enough to show that $V(G)\subseteq$ $\mathrm{corona}(G)\cup$
$N(\mathrm{core}(G))$.

Let $a\in V(G)$.

\textit{Case 1}. $a\in V(C)$. If $b\in N(a)\cap V(C)$, then, by Lemma
\ref{lem2}\emph{(ii)}, the edge $ab$ is $\alpha$-critical. Hence
$a\in\mathrm{corona}(G)$.

\textit{Case 2}. $a\in V(G)-V(C)$. It follows that $a\in V(D_{y}-y)$, for some
$y\in V(C)$.

Since $G\left[  D_{y}-y\right]  $ is bipartite, by Theorem \ref{th1}%
\emph{(iii)}, we know that $V(D_{y}-y)=\mathrm{corona}(D_{y}-y)\cup$
$N(\mathrm{core}(D_{y}-y))$, while by Theorem \ref{th2}\emph{(iii)}, we have
that $\Omega\left(  G\right)  |_{V\left(  D_{y}-y\right)  }=\Omega\left(
D_{y}-y\right)  $ for every $y\in V\left(  C\right)  $, which ensures that
$\mathrm{corona}(D_{y}-y)\subseteq\mathrm{corona}(G)$.

Therefore, either $a\in\mathrm{corona}(D_{y}-y)\subseteq\mathrm{corona}(G)$,
or $a\in N(\mathrm{core}(D_{y}-y))\subseteq N(\mathrm{core}(G))$, because
$\mathrm{core}\left(  D_{y}-y\right)  \subseteq\mathrm{core}\left(  G\right)
$, by Theorem \ref{th2}\emph{(ii)}. Thus, $a\in\mathrm{corona}(G)\cup$
$N(\mathrm{core}(G))$.

All in all, $V(G)=\mathrm{corona}(G)\cup$ $N(\mathrm{core}(G))$.

\emph{(ii) }In\emph{ }the proof of Part \emph{(i)} we showed that
$\mathrm{corona}(D_{y}-y)\subseteq\mathrm{corona}(G)$ for every $y\in V\left(
C\right)  $, and $V\left(  C\right)  \subseteq\mathrm{corona}(G)$.

Hence, $V(C)\cup\left(
{\displaystyle\bigcup\limits_{y\in V(C)}}
\mathrm{corona}(D_{y}-y)\right)  \subseteq\mathrm{corona}(G)$. To complete the
proof, it remains to validate that $\mathrm{corona}(G)\subseteq V(C)\cup
\left(
{\displaystyle\bigcup\limits_{y\in V(C)}}
\mathrm{corona}(D_{y}-y)\right)  $. Let $a\in\mathrm{corona}(G)$. Then, $a\in
S$ for some $S\in\Omega\left(  G\right)  $. Suppose $a\notin V(C)$, \ then
there must be $y\in V(C)$ such that $a\in D_{y}-y$. Thus, $a\in S\cap
V(D_{y}-y)\subseteq\mathrm{corona}(D_{y}-y)$, because $\Omega\left(  G\right)
|_{V\left(  D_{y}-y\right)  }=\Omega\left(  D_{y}-y\right)  $, in accordance
with Theorem \ref{th2}\emph{(iii)}.
\end{proof}

\begin{theorem}
\cite{LevMan2020}\label{th44} If $G$ is an almost bipartite
non-K\"{o}nig-Egerv\'{a}ry graph, then
\[
d(G)=\alpha(G)-\mu(G)=\left\vert \mathrm{core}(G)\right\vert -\left\vert
N(\mathrm{core}(G))\right\vert .
\]

\end{theorem}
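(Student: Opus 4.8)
The plan is to prove the two equalities separately. The second one, $d(G)=|\mathrm{core}(G)|-|N(\mathrm{core}(G))|$, is immediate: by Lemma \ref{lem8}\emph{(ii)} the set $\mathrm{core}(G)$ is critical in $G$, so $d(G)=d_{G}(\mathrm{core}(G))=|\mathrm{core}(G)|-|N(\mathrm{core}(G))|$ by the very definition of the difference of a set. Hence all the work goes into the first equality $d(G)=\alpha(G)-\mu(G)$, and since $\alpha(G)-\mu(G)\leq d(G)$ holds for every graph, it suffices to bound $d(G)$ from above; in fact I will show that both quantities equal $\sum_{y\in V(C)}\bigl(\alpha(D_{y})-\mu(D_{y})\bigr)-|V(C)|$.

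First I would localize $d_{G}$ to the bipartite pieces $D_{y}-y$. By Theorem \ref{th2}\emph{(ii)}, $\mathrm{core}(G)=\bigcup_{y\in V(C)}\mathrm{core}(D_{y}-y)$, and by Theorem \ref{th2}\emph{(i)}, $\mathrm{core}(G)\cap N[V(C)]=\emptyset$. Consequently, for $u\in\mathrm{core}(D_{y}-y)\subseteq\mathrm{core}(G)$ one has $u\notin V(C)$, so no edge of $G$ at $u$ belongs to $C$; such an edge therefore stays inside the connected component $D_{y}$ of $G-E(C)$, and it cannot reach $y$ because $u\notin N(V(C))$. Hence $N_{G}(\mathrm{core}(D_{y}-y))=N_{D_{y}-y}(\mathrm{core}(D_{y}-y))$. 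Since the sets $V(D_{y})$ are pairwise disjoint by Proposition \ref{prop3}, both $\mathrm{core}(G)$ and $N(\mathrm{core}(G))$ split as disjoint unions over $y\in V(C)$, which yields
\[
d(G)=d_{G}(\mathrm{core}(G))=\sum_{y\in V(C)}d_{D_{y}-y}\bigl(\mathrm{core}(D_{y}-y)\bigr)=\sum_{y\in V(C)}d(D_{y}-y),
\]
the last equality because $\mathrm{core}(D_{y}-y)=\mathrm{\ker}(D_{y}-y)$ (Theorem \ref{th3}\emph{(ii)}, as $D_{y}-y$ is bipartite) is critical in $D_{y}-y$ (Theorem \ref{th3}\emph{(i)}).

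Next I would invoke the elementary fact that $d(H)=\alpha(H)-\mu(H)$ for every bipartite graph $H$: splitting $d_{H}(X)$ along the bipartition $V(H)=A\sqcup B$ (the neighbourhoods of $X\cap A$ and of $X\cap B$ lie in disjoint parts, so $d_{H}(X)=d_{H}(X\cap A)+d_{H}(X\cap B)$) and maximizing each summand by the defect form of K\"onig's theorem gives $d(H)=(|A|-\mu(H))+(|B|-\mu(H))=n(H)-2\mu(H)=\alpha(H)-\mu(H)$. Applying this to each $D_{y}-y$, together with $\alpha(D_{y}-y)=\alpha(D_{y})-1$ (by Proposition \ref{prop44}\emph{(i)}, $y\in\mathrm{core}(D_{y})$, so deleting $y$ drops the independence number) and $\mu(D_{y}-y)=\mu(D_{y})$ (compare $n(D_{y})=\alpha(D_{y})+\mu(D_{y})$ with $n(D_{y}-y)=\alpha(D_{y}-y)+\mu(D_{y}-y)$, both $D_{y},D_{y}-y$ being bipartite), gives $d(G)=\sum_{y\in V(C)}\bigl(\alpha(D_{y})-\mu(D_{y})\bigr)-|V(C)|$. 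On the other hand, Lemma \ref{lem9} gives $\alpha(G)=\sum_{y\in V(C)}\alpha(D_{y})-\lfloor|V(C)|/2\rfloor-1$, and since $G$ is non-K\"onig-Egerv\'ary we have $n(G)-1=\alpha(G)+\mu(G)$ (Lemma \ref{lem2}, Proposition \ref{prop44}\emph{(iii)}); combining these with $n(G)=\sum_{y\in V(C)}\bigl(\alpha(D_{y})+\mu(D_{y})\bigr)$ forces $\mu(G)=\sum_{y\in V(C)}\mu(D_{y})+\lfloor|V(C)|/2\rfloor$, and then subtracting, using that $|V(C)|$ is odd so that $2\lfloor|V(C)|/2\rfloor+1=|V(C)|$, yields $\alpha(G)-\mu(G)=\sum_{y\in V(C)}\bigl(\alpha(D_{y})-\mu(D_{y})\bigr)-|V(C)|=d(G)$.

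The step I expect to be the main obstacle is the localization $N_{G}(\mathrm{core}(D_{y}-y))=N_{D_{y}-y}(\mathrm{core}(D_{y}-y))$ and the resulting additivity $d(G)=\sum_{y\in V(C)}d(D_{y}-y)$: this is precisely where the non-K\"onig-Egerv\'ary hypothesis enters (through Theorem \ref{th2}\emph{(i)}, ultimately Lemma \ref{lem5}), and it genuinely fails for almost bipartite K\"onig-Egerv\'ary graphs, where $\mathrm{core}(D_{y}-y)$ may contain a vertex of $N_{1}(C)$ whose neighbourhood spills onto the cycle. Everything after that is bookkeeping, the only delicate point being to keep the $\lfloor|V(C)|/2\rfloor$ terms and the ``$+1$'' coming from $n(G)-1=\alpha(G)+\mu(G)$ correctly aligned; and one must make sure the auxiliary bipartite identity $d(H)=\alpha(H)-\mu(H)$ is either cited or supplied with its short proof.
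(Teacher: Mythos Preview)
Your proof plan is correct. Note, however, that the paper does not itself prove Theorem~\ref{th44}; it is quoted from \cite{LevMan2020} and used as a black box in the proof of Theorem~\ref{th5}. So there is no in-paper argument to compare against.

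That said, your argument is self-contained within the toolkit already available here: Lemma~\ref{lem8}\emph{(ii)} gives the second equality immediately, and for the first you correctly localize $d(G)$ to the bipartite pieces $D_y-y$ via Theorem~\ref{th2}\emph{(i),(ii)} and Proposition~\ref{prop3}, then invoke the bipartite identity $d(H)=\alpha(H)-\mu(H)$ (for which you supply the short defect-form argument), and finally match this against $\alpha(G)-\mu(G)$ through Lemma~\ref{lem9} and Proposition~\ref{prop44}\emph{(iii)}. The arithmetic with $\lfloor|V(C)|/2\rfloor$ and the parity of $|V(C)|$ checks out. The localization step $N_G(\mathrm{core}(D_y-y))=N_{D_y-y}(\mathrm{core}(D_y-y))$ is indeed the crux, and your justification via $\mathrm{core}(G)\cap N[V(C)]=\emptyset$ is exactly right; this is equivalent in content to invoking Lemma~\ref{lem5} directly, as is done elsewhere in the paper. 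In short, you have reconstructed a proof of the cited result from the surrounding machinery, which is more than the paper itself provides.
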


\begin{theorem}
\label{th5}If $G$ is an almost bipartite non-K\"{o}nig-Egerv\'{a}ry graph,
then%
\[
\left\vert \mathrm{corona}(G)\right\vert +\left\vert \mathrm{core}%
(G)\right\vert =2\alpha\left(  G\right)  +1.
\]

\end{theorem}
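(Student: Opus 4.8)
The plan is to turn the structural decompositions of $\mathrm{core}(G)$ and $\mathrm{corona}(G)$ (Theorems \ref{th3322} and \ref{th333}\emph{(ii)}) into a cardinality identity by exploiting disjointness, apply the König-Egerv\'{a}ry identity of Theorem \ref{th1}\emph{(ii)} to each bipartite block $D_{y}-y$, and finally reconcile the resulting sum with the formula for $\alpha(G)$ in Lemma \ref{lem9}.

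First I would record the disjointness needed to pass from set identities to cardinality sums. By Proposition \ref{prop3} the sets $V(D_{y})$, $y\in V(C)$, are pairwise disjoint, and since $C$ is the unique odd cycle one has $V(D_{y})\cap V(C)=\{y\}$; hence the sets $V(D_{y}-y)$, $y\in V(C)$, are pairwise disjoint and disjoint from $V(C)$. Consequently the union in Theorem \ref{th3322}, $\mathrm{core}(G)=\bigcup_{y\in V(C)}\mathrm{core}(D_{y}-y)$, and the union in Theorem \ref{th333}\emph{(ii)}, $\mathrm{corona}(G)=V(C)\cup\bigcup_{y\in V(C)}\mathrm{corona}(D_{y}-y)$, are disjoint unions, so $\left\vert \mathrm{corona}(G)\right\vert +\left\vert \mathrm{core}(G)\right\vert =\left\vert V(C)\right\vert +\sum_{y\in V(C)}\bigl(\left\vert \mathrm{corona}(D_{y}-y)\right\vert +\left\vert \mathrm{core}(D_{y}-y)\right\vert\bigr)$. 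Each $D_{y}-y$ is bipartite, hence a König-Egerv\'{a}ry graph, so Theorem \ref{th1}\emph{(ii)} gives $\left\vert \mathrm{corona}(D_{y}-y)\right\vert +\left\vert \mathrm{core}(D_{y}-y)\right\vert =2\alpha(D_{y}-y)$ for every $y\in V(C)$.

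Next I would replace $\alpha(D_{y}-y)$ by $\alpha(D_{y})$. Because $G$ is almost bipartite and \emph{not} a König-Egerv\'{a}ry graph, Proposition \ref{prop44}\emph{(i)} yields $y\in\mathrm{core}(D_{y})$ for every $y\in V(C)$; since $y$ then lies in every maximum independent set of $D_{y}$, deleting it lowers the independence number by exactly one, i.e. $\alpha(D_{y}-y)=\alpha(D_{y})-1$. Substituting, $\left\vert \mathrm{corona}(G)\right\vert +\left\vert \mathrm{core}(G)\right\vert =\left\vert V(C)\right\vert +2\sum_{y\in V(C)}\alpha(D_{y})-2\left\vert V(C)\right\vert =2\sum_{y\in V(C)}\alpha(D_{y})-\left\vert V(C)\right\vert$. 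Finally, Lemma \ref{lem9} gives $\sum_{y\in V(C)}\alpha(D_{y})=\alpha(G)+\left\lfloor \left\vert V(C)\right\vert/2\right\rfloor +1$, so the right-hand side equals $2\alpha(G)+2\left\lfloor \left\vert V(C)\right\vert/2\right\rfloor +2-\left\vert V(C)\right\vert$; as $\left\vert V(C)\right\vert$ is odd, $2\left\lfloor \left\vert V(C)\right\vert/2\right\rfloor =\left\vert V(C)\right\vert-1$ and the last three terms collapse to $1$, giving $\left\vert \mathrm{corona}(G)\right\vert +\left\vert \mathrm{core}(G)\right\vert =2\alpha(G)+1$.

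The argument is essentially bookkeeping, so there is no deep obstacle; the two points that require a little care are the disjointness claim that converts the three set identities into additive cardinalities, and the equality $\alpha(D_{y}-y)=\alpha(D_{y})-1$, which rests on Proposition \ref{prop44}\emph{(i)} and is exactly where the hypothesis that $G$ is non-König-Egerv\'{a}ry is genuinely used.
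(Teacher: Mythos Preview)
Your argument is correct. You decompose both $\mathrm{core}(G)$ and $\mathrm{corona}(G)$ along the partition $\{V(D_{y}-y):y\in V(C)\}\cup\{V(C)\}$ via Theorems \ref{th3322} and \ref{th333}\emph{(ii)}, apply the K\"{o}nig-Egerv\'{a}ry identity of Theorem \ref{th1}\emph{(ii)} to each bipartite piece $D_{y}-y$, and then sum using Proposition \ref{prop44}\emph{(i)} and Lemma \ref{lem9}. All the disjointness claims and the step $\alpha(D_{y}-y)=\alpha(D_{y})-1$ are justified as you say.

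The paper takes a different, shorter route that stays at the global level and never unpacks the block decomposition. It uses Theorem \ref{th333}\emph{(i)} together with the observation that $\mathrm{corona}(G)$ and $N(\mathrm{core}(G))$ are disjoint (a neighbor of $\mathrm{core}(G)$ cannot lie in any maximum independent set) to get $\left\vert\mathrm{corona}(G)\right\vert+\left\vert N(\mathrm{core}(G))\right\vert=n(G)=\alpha(G)+\mu(G)+1$ by Lemma \ref{lem2}, and then substitutes $\left\vert\mathrm{core}(G)\right\vert=\left\vert N(\mathrm{core}(G))\right\vert+\alpha(G)-\mu(G)$ from Theorem \ref{th44}. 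The trade-off is that the paper's proof needs the critical-difference identity of Theorem \ref{th44}, whereas your approach avoids Theorem \ref{th44} entirely but must invoke the finer structural results (Theorem \ref{th333}\emph{(ii)}, Proposition \ref{prop44}\emph{(i)}, Lemma \ref{lem9}) and do a bit more arithmetic. Both are clean; yours makes the ``$+1$'' visibly come from the oddness of $\left\vert V(C)\right\vert$, while the paper's makes it come from the gap $n(G)-\alpha(G)-\mu(G)=1$.
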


\begin{proof}
Let $S\in\Omega\left(  G\right)  $. According to Theorem \ref{th333} and Lemma
\ref{lem2}, we infer that
\[
\left\vert \mathrm{corona}(G)\right\vert +\left\vert N\left(  \mathrm{core}%
\left(  G\right)  \right)  \right\vert =\left\vert V\left(  G\right)
\right\vert =\alpha\left(  G\right)  +\mu\left(  G\right)  +1.
\]

By Theorem \ref{th44}, we obtain%

\begin{gather*}
\left\vert \mathrm{corona}(G)\right\vert +\left\vert \mathrm{core}\left(
G\right)  \right\vert =\left\vert \mathrm{corona}(G)\right\vert +\left\vert
N(\mathrm{core}(G))\right\vert +\alpha(G)-\mu(G)\\
=\alpha\left(  G\right)  +\mu\left(  G\right)  +1+\alpha(G)-\mu(G)=2\alpha
(G)+1
\end{gather*}
as required.
\end{proof}

\begin{corollary}
\cite{LevMan2014} If $G$ is a unicyclic non-K\"{o}nig-Egerv\'{a}ry graph, then
$\left\vert \mathrm{corona}(G)\right\vert +\left\vert \mathrm{core}%
(G)\right\vert =2\alpha\left(  G\right)  +1$.
\end{corollary}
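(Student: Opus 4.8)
The plan is to derive the identity by combining the structural covering of Theorem~\ref{th333}(i) with the critical-difference formula of Theorem~\ref{th44}, after using Lemma~\ref{lem2} to fix the order of $G$. First I would fix some $S\in\Omega(G)$ and note that $\mathrm{core}(G)\subseteq S$; since $S$ is independent, no vertex of $N(\mathrm{core}(G))$ can belong to $S$, and letting $S$ range over $\Omega(G)$ this shows $\mathrm{corona}(G)\cap N(\mathrm{core}(G))=\emptyset$. Combined with $\mathrm{corona}(G)\cup N(\mathrm{core}(G))=V(G)$ from Theorem~\ref{th333}(i), this gives the exact count
\[
\left\vert \mathrm{corona}(G)\right\vert +\left\vert N(\mathrm{core}(G))\right\vert =\left\vert V(G)\right\vert .
\]

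Next, because $G$ is an almost bipartite non-K\"{o}nig-Egerv\'{a}ry graph, Lemma~\ref{lem2} yields $\left\vert V(G)\right\vert =\alpha(G)+\mu(G)+1$, so the previous display becomes $\left\vert \mathrm{corona}(G)\right\vert +\left\vert N(\mathrm{core}(G))\right\vert =\alpha(G)+\mu(G)+1$. Then I would invoke Theorem~\ref{th44}, which gives $\left\vert \mathrm{core}(G)\right\vert -\left\vert N(\mathrm{core}(G))\right\vert =\alpha(G)-\mu(G)$. Adding these two relations cancels $\left\vert N(\mathrm{core}(G))\right\vert$ and $\mu(G)$ simultaneously:
\[
\left\vert \mathrm{corona}(G)\right\vert +\left\vert \mathrm{core}(G)\right\vert =\bigl(\alpha(G)+\mu(G)+1\bigr)+\bigl(\alpha(G)-\mu(G)\bigr)=2\alpha(G)+1.
\]

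The only step that is not pure bookkeeping is the disjointness $\mathrm{corona}(G)\cap N(\mathrm{core}(G))=\emptyset$, which upgrades the covering of Theorem~\ref{th333}(i) to a genuine partition of $V(G)$ and hence to an additive vertex count; but this is immediate from $\mathrm{core}(G)\subseteq S$ for every $S\in\Omega(G)$. After that the argument is a direct substitution of the already-established Lemma~\ref{lem2} and Theorems~\ref{th333} and~\ref{th44}, so I do not anticipate a genuine obstacle.
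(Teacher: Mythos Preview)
Your proof is correct and follows essentially the same route as the paper. In the paper the Corollary is immediate from Theorem~\ref{th5} (a unicyclic non-K\"{o}nig-Egerv\'{a}ry graph has its unique cycle odd, hence is almost bipartite), and the proof you give is precisely the argument used to establish Theorem~\ref{th5}: combine Theorem~\ref{th333}(i) with Lemma~\ref{lem2} to get $\left\vert \mathrm{corona}(G)\right\vert +\left\vert N(\mathrm{core}(G))\right\vert =\alpha(G)+\mu(G)+1$, then use Theorem~\ref{th44} to swap $\left\vert N(\mathrm{core}(G))\right\vert$ for $\left\vert \mathrm{core}(G)\right\vert$. Your explicit justification of the disjointness $\mathrm{corona}(G)\cap N(\mathrm{core}(G))=\emptyset$ is a nice touch that the paper's proof of Theorem~\ref{th5} leaves implicit.
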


\section{Conclusions}

It is known that for every graph $\mathrm{\ker}(G)\subseteq\mathrm{core}(G)$.
In this paper we showed that an almost bipartite non-K\"{o}nig-Egerv\'{a}ry
graph satisfies $\mathrm{\ker}(G)=\mathrm{core}(G)$, like bipartite graphs and
unicyclic non-K\"{o}nig-Egerv\'{a}ry graphs.

\begin{problem}
Characterize graphs enjoying $\mathrm{\ker}(G)=\mathrm{core}(G)$.
\end{problem}

We \ also proved that $\mathrm{corona}(G)\cup$ $N\left(  \mathrm{core}%
(G)\right)  =V(G)$ is true for almost bipartite non-K\"{o}nig-Egerv\'{a}ry
graphs, like for K\"{o}nig-Egerv\'{a}ry graphs.

\begin{problem}
Characterize graphs enjoying $\mathrm{corona}(G)\cup$ $N\left(  \mathrm{core}%
(G)\right)  =V(G)$.
\end{problem}

Theorem \ref{th5} claims that $\left\vert \mathrm{corona}(G)\right\vert
+\left\vert \mathrm{core}(G)\right\vert =2\alpha\left(  G\right)  +1$ holds
for almost bipartite non-K\"{o}nig-Egerv\'{a}ry graphs, like for unicyclic
non-K\"{o}nig-Egerv\'{a}ry graphs.

\begin{problem}
Characterize graphs enjoying $\left\vert \mathrm{corona}(G)\right\vert
+\left\vert \mathrm{core}(G)\right\vert =2\alpha\left(  G\right)  +1$.
\end{problem}

Proposition \ref{prop2}\ motivates the following.

\begin{conjecture}
\bigskip\label{conj1}If $G$ is an almost bipartite non-K\"{o}nig-Egerv\'{a}ry
graph, then every maximum matching of $G$ contains $\left\lfloor
\frac{\left\vert V(C)\right\vert }{2}\right\rfloor $ edges belonging to its
unique odd cycle $C$.
\end{conjecture}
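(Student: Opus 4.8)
The plan is to fix an arbitrary maximum matching $M$ of $G$, compute $\mu(G)$ exactly in terms of the blocks $D_{y}$, $y\in V(C)$, and then read off from a size count on $M$ how many of its edges are forced onto $C$. To obtain $\mu(G)$: each $D_{y}$ is bipartite, hence a K\"{o}nig-Egerv\'{a}ry graph, so $n(D_{y})=\alpha(D_{y})+\mu(D_{y})$; summing over $y\in V(C)$ and using that $\{V(D_{y}):y\in V(C)\}$ is a partition of $V(G)$ (the consequence of Proposition \ref{prop3}) gives $n(G)=\sum_{y\in V(C)}\alpha(D_{y})+\sum_{y\in V(C)}\mu(D_{y})$. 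Since $G$ is almost bipartite and not K\"{o}nig-Egerv\'{a}ry, Lemma \ref{lem2} yields $\alpha(G)+\mu(G)=n(G)-1$, while Lemma \ref{lem9} yields $\alpha(G)=\sum_{y\in V(C)}\alpha(D_{y})-\left\lfloor \frac{\left\vert V(C)\right\vert }{2}\right\rfloor -1$. Eliminating $n(G)$ and $\alpha(G)$ among these three identities gives
\[
\mu(G)=\sum_{y\in V(C)}\mu(D_{y})+\left\lfloor \frac{\left\vert V(C)\right\vert }{2}\right\rfloor .
\]

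Next I would decompose $M$. The graph $G-E(C)$ is bipartite, and its connected components are precisely the subgraphs $D_{y}$, so every edge of $G$ not lying on $C$ is contained in a single block $V(D_{y})$; consequently $M$ splits as the disjoint union of $M_{C}:=M\cap E(C)$ and the sets $M_{y}:=M\cap E(D_{y})$, $y\in V(C)$, each $M_{y}$ being a matching of $D_{y}$. Hence
\[
\mu(G)=\left\vert M\right\vert =\left\vert M_{C}\right\vert +\sum_{y\in V(C)}\left\vert M_{y}\right\vert \leq\left\vert M_{C}\right\vert +\sum_{y\in V(C)}\mu(D_{y}),
\]
and comparing with the displayed value of $\mu(G)$ forces $\left\vert M_{C}\right\vert \geq\left\lfloor \frac{\left\vert V(C)\right\vert }{2}\right\rfloor$. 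On the other hand $M_{C}$ is a matching of the odd cycle $C$, so $\left\vert M_{C}\right\vert \leq\left\lfloor \frac{\left\vert V(C)\right\vert }{2}\right\rfloor$. Therefore $\left\vert M\cap E(C)\right\vert =\left\lfloor \frac{\left\vert V(C)\right\vert }{2}\right\rfloor$, which is the assertion.

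I do not expect a serious obstacle once Lemma \ref{lem9} is in hand: the argument above is essentially bookkeeping, and the non-K\"{o}nig-Egerv\'{a}ry hypothesis enters only through the identity $\alpha(G)+\mu(G)=n(G)-1$ used to pin down $\mu(G)$. The one step deserving care is the claim that no edge of $M$ off $C$ joins two distinct blocks $V(D_{y})$; this follows because the $D_{y}$ are exactly the connected components of the bipartite graph $G-E(C)$, together with Proposition \ref{prop3}. Were Lemma \ref{lem9} not available, the real work would shift to establishing the exact value of $\mu(G)$, for which one would invoke Corollary \ref{cor3} and Proposition \ref{prop44} together with the non-K\"{o}nig-Egerv\'{a}ry assumption.
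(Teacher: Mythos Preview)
The paper does not prove this statement: it is recorded in the Conclusions section as Conjecture~\ref{conj1}, motivated by Proposition~\ref{prop2} (which only guarantees that a maximum matching meets $E(C)$ in at least one edge). So there is no ``paper's proof'' to compare against.

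Your argument, however, is correct and in fact settles the conjecture. The key identity
\[
\mu(G)=\sum_{y\in V(C)}\mu(D_{y})+\left\lfloor \tfrac{|V(C)|}{2}\right\rfloor
\]
follows exactly as you say from $n(G)=\sum_{y}n(D_{y})=\sum_{y}\bigl(\alpha(D_{y})+\mu(D_{y})\bigr)$ (each $D_{y}$ bipartite), together with $n(G)-1=\alpha(G)+\mu(G)$ (Lemma~\ref{lem2}) and Lemma~\ref{lem9}. The edge-decomposition step is also sound: since the $D_{y}$ are precisely the connected components of $G-E(C)$ and $\{V(D_{y}):y\in V(C)\}$ partitions $V(G)$ (Proposition~\ref{prop3} and the remark following it), every edge of $G$ lies either in $E(C)$ or in a single $E(D_{y})$, so any maximum matching $M$ splits as $M_{C}\cup\bigcup_{y}M_{y}$ with $|M_{y}|\le\mu(D_{y})$. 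Comparing sizes forces $|M_{C}|\ge\lfloor|V(C)|/2\rfloor$, and the trivial upper bound for a matching on an odd cycle gives equality.

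In short: you have proved the conjecture, and your argument is strictly stronger than the paper's Proposition~\ref{prop2}, whose two-case analysis becomes unnecessary once the exact value of $\mu(G)$ is available.
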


\end{document}